\documentclass{amsart}

\usepackage{amssymb}
\usepackage{amsthm}
\usepackage{enumerate}

\newtheorem{thr}{Theorem}
\newtheorem{lem}{Lemma}

\newtheorem{cor}{Corollary}
\newtheorem{prop}{Proposition}
\newtheorem{obs}{Observation}

\theoremstyle{remark}
\newtheorem*{remark}{Remark}

\theoremstyle{definition}
\newtheorem*{acknowledgment}{Acknowledgments}

\begin{document}
\title{Bernstein and Kantorovich polynomials diminish the $\Lambda$-variation}

\author[Klaudiusz Czudek]{Klaudiusz Czudek}
\address{Institute of Mathematics, Faculty of Mathematics, Physics, and Informatics, University of Gda\'{n}sk, 80-308 Gda\'{n}sk, Poland}
\email{klaudiusz.czudek@gmail.com}

\begin{abstract}
We prove the $\Lambda$-variation diminishing property of the Bernstein and Kantorovich polynomials. Next we apply this result to characterize the space $C\Lambda BV_c$ as the closure of the space of polynomials in the $\|\cdot\|_\Lambda$ norm. A new proof of the separability of $C\Lambda BV_c$ is given.
\end{abstract}

\date{\today}
\keywords{$\Lambda$-variation, Bernstein polynomials, functions continuous in $\Lambda$-variation}
\maketitle
\section{Introduction}
It is a well-known fact that every continuous function defined on the interval $[0,1]$ can be approximated by polynomials. Several proofs of this theorem were published, the first one by Karl Weierstrass and later by Henri Lebesgue, Marshall H. Stone and Sergei Bernstein, among others. This one given by Bernstein is particulary important in our paper. It establishes that for a continuous function $f:[0,1]\rightarrow\mathbb{R}$ polynomials of the form:
$$B_nf(x)=\sum_{k=0}^n f\left(\frac{k}{n}\right){n \choose k}x^k(1-x)^{n-k}$$
tend to $f$ uniformly on $[0,1]$. Properties of these polynomials, called the Bernstein polynomials of the function $f$, were intensively studied in the last century, also in the case when the initial function $f$ is not continuous. Basic facts related to this topic may be found in the classic positions \cite{Lorentz_Constructive_approximation} or \cite{Lorentz_Bernstein_polynomials}. The Bernstein polynomials have also appeared attractive in numerous applications. We would like to suggest the recent paper \cite{Farouki_Bernstein_polynomial_basis} as an overview of these achievements. Observe that one may look at the Bernstein polynomials as a family of linear operators $f\mapsto B_nf$ what is the reason why we use terms "Bernstein operators" and "Bernstein polynomials" interchangeably.

In the twentieth century a lot of generalizations of the Bernstein operators appeared, so-called Bernstein-type operators. One of the most significant are the Sz\'asz-Mirakyan, the Baskakov and the Kantorovich operators. The last one has an important place in our paper. For a function $f$, integrable on the interval $[0,1]$, one can define the $n$-th Kantorovich polynomial:
$$K_nf(x)=\sum_{k=0}^n {n \choose k}x^k(1-x)^{n-k}(n+1)\int_{\frac{k}{n+1}}^{\frac{k+1}{n+1}}f(t)dt.$$
In comparsion with the Bernstein polynomials, the value of $f$ at the point $\frac{k}{n}$ is replaced with the mean value of $f$ on the interval $[\frac{k}{n+1},\frac{k+1}{n+1}]$. In 1930 Kantorovich proved that $\|K_nf-f\|_1\to 0$ for any integrable function $f$, where $\|\cdot\|_1$ denotes the standard norm in the space of integrable functions $L^1$. In \cite{Lorentz_Constructive_approximation} we may find other approximation theorems related to Kantorovich polynomials, including $L^p$ and supremum norms.

Among a number of properties of Bernstein polynomials we would like to distinguish the variation diminishing property, proved by G.G. Lorentz in \cite{Lorentz_Zur_theorie_polynome_Bernstein}. It states that for an arbitrary function $f:[0,1]\rightarrow\mathbb{R}$ of bounded variation the variation of the Bernstein polynomial of the function $f$ is not greater than the variation of $f$, i.e. $V(B_nf)\leq V(f)$. Recall that the variation of a function $f:[0,1]\rightarrow\mathbb{R}$ is the quantity:
$$V(f)=\sup \sum_{i=1}^{n-1} \left|f(x_{i+1})-f(x_{i})\right|$$
where supremum is taken over all finite sequences $0\leq x_1<...<x_n\leq 1$. This property was also proven in the case of other Bernstein-type operators, in particular in the case of Kantorovich polynomials. Additionaly Lorentz showed that $\|B_nf-f\|_{BV}\to 0$ if and only if $f$ is absolutely continuous, where $\|\cdot\|_{BV}$ denotes the standard norm in the space $BV$ of functions of bounded variation $\|u\|_{BV}=V(u)+|u(0)|$. Proofs of all these facts and rates of approximation may be found in \cite{Vinti_Congergence_in_variation_rates_of_approximation_Bernstein_polynomials}.  Let us mention here that the variation diminishing property is called in \cite{Vinti_Congergence_in_variation_rates_of_approximation_Bernstein_polynomials} the variation detracting property, while the term "variation diminishing" is used to express the statement that a number of zeros of a Bernstein polynomial of a function $f$ with multiplicities counted is not greater than the number of sign changes of the function $f$. Our terminology follows from \cite{Adell_Cal_Bernstein_operators_diminish_variation}.

The variation diminishing property of the Bernstein polynomials was also proven for certain generalization of the regular variation, so-called $\varphi$-variation. Given a convex, nondecreasing function on $[0,\infty)$ continuous in 0 and satisfying $\varphi(0)=0$ define the $\varphi$-variation of $f:[0,1]\rightarrow\mathbb{R}$ as the number:
$$V_{\varphi}(f)=\sup \sum_{i=1}^{n-1} \varphi\left(|f(x_{i+1})-f(x_{i})|\right)$$
where supremum is taken over all finite sequences $0\leq x_1<...<x_n\leq 1$. We say that a function $f$ is of bounded $\varphi$-variation if $V_\varphi(f)<\infty$. The space of all functions of bounded $\varphi$-variation is denoted by $\varphi BV$. Basic facts about $\varphi$-variation may be found in \cite{Musielak_Generalized_variations}. J. A. Adell and J. de la Cal proved in \cite{Adell_Cal_Bernstein_operators_diminish_variation}, using probabilistic representation of the Bernstein operators, the $\varphi$-variation diminishing property, i.e. that $V_\varphi(B_nf)\leq V_\varphi(f)$ for $f\in\varphi BV$. Let us mention that Lorentz's technique from \cite{Lorentz_Zur_theorie_polynome_Bernstein} could not be applied here, since he used the fact that every function of bounded variation is a difference of two monotone functions, what does not remain valid in the case of the $\varphi$-variation.

There exists also other generalization of the regular variation, the $\Lambda$-variation, introduced by Daniel Waterman in \cite{Waterman_On_convergence_Fourier_series_generalized_variation} for the sake of its applicability to the study of the uniform convergence of Fourier series. We say that a sequence $\Lambda=(\lambda_n)_n$ of positive reals is a $\Lambda$-sequence if $\Lambda$ is nondecreasing and $\sum_n\frac{1}{\lambda_n}=\infty$. Given an interval $I=[a,b]$ and a function $f$ denote $|f(I)|=|f(b)-f(a)|$. A finite or infinite sequence $(I_n)_n$ of closed intervals is called nonoverlapping if for every $k\not=n$ intervals $I_n, I_k$ intersect at most at the endpoints. We define the $\Lambda$-variation of a function $f:[0,1]\rightarrow\mathbb{R}$ as the number:
$$V_\Lambda(f)=\sup\sum_{n}\frac{|f(I_n)|}{\lambda_n},$$
where supremum is taken over all sequences (finite or infinite) of nonoverlapping intervals $(I_n)_n$ contained in $[0,1]$. Naturally, we say that function $f$ is of bounded $\Lambda$-variation if $V_\Lambda(f)<\infty$. Notice that for $\Lambda=(1,1,1,....)$ we get that $V_\Lambda(f)=V(f)$.

Given a finite sequence of nonoverlapping closed intervals $\mathcal{I}=\langle I_1,...,I_m\rangle$ contained in $[0,1]$, $\Lambda$-sequence $\Lambda$ and a function $f:[0,1]\rightarrow\mathbb{R}$ define:
$$\sigma_\Lambda\left(f,\mathcal{I}\right)=\sum_{k=1}^m\frac{|f(I_k)|}{\lambda_k}$$
It is readily seen that we may define the $\Lambda$-variation of the function $f:[0,1]\rightarrow\mathbb{R}$ equivalently as the number:
$$V_\Lambda(f)=\sup\sigma_\Lambda\left(f,\mathcal{I}\right)$$
where supremum is taken over all finite sequences $\mathcal{I}$ of nonoverlapping closed intervals contained in $[0,1]$. Note that the symbol $\sigma_\Lambda$ was used in the papers of Prus-Wi{\'s}niowski but with a slightly different meaning (cf. \cite{Prus_Variation_Hausdorff_distance}, \cite{Prus_Separability_continuous_functions_contunuous_Lambda_variation}, \cite{Prus_Absolute_continuity}). 

For a fixed $\Lambda$-sequence $\Lambda$, the space of functions of bounded $\Lambda$-variation, denoted by $\Lambda BV$, is a linear space, which may be equipped with the norm $\|f\|_\Lambda=V_\Lambda(f)+|f(0)|$. Note that the $\Lambda$-variation is only a seminorm. Moreover, the normed space $(\Lambda BV, \|\cdot\|_\Lambda)$ is a Banach space (Section 3. in \cite{Waterman_On_Lambda_bounded_variation}). Obviously, every function from $\Lambda BV$ is bounded. Using simple arguments one can show that convergence in the $\|\cdot\|_\Lambda$ norm implies convergence in the supremum norm. Notice that we may deduce from this fact that the space $C\Lambda BV$ of continuous functions of bounded $\Lambda$-variation is a closed subspace of $\Lambda BV$.

Further, if $\Lambda$ is a proper $\Lambda$-sequence, i.e. $\lambda_n\to\infty$, then a function $f$ is said to be continuous in $\Lambda$-variation if $\lim_{m\to\infty} V_{\Lambda_{(m)}}(f)\to 0$, where $\Lambda_{(m)}$ denotes a $\Lambda$-sequence obtained from $\Lambda$ by omission of the first $m$ terms: $\Lambda_{(m)}=(\lambda_{m+1}, \lambda_{m+2},...)$. We denote by $\Lambda BV_c$ and $C\Lambda BV_c$ the space of functions continuous in $\Lambda$-variation and the space of continuous functions that are continuous in $\Lambda$-variation, respectively. The concept of continuity in $\Lambda$-variation was introduced by Waterman in \cite{Waterman_Summability_Fourier_series_bounded_variation} in connection with $(C,\beta)$-summability of Fourier series. It gained more importance later, since it turned out that Fourier series of these functions have much more interesting properties (cf. \cite{Avdispahic_Generalized_variation_Fourier_series}).

The main purpose of this paper is to generalize Lorentz's results from \cite{Lorentz_Zur_theorie_polynome_Bernstein} to the $\Lambda$-variation. In the first section we prove that the Bernstein polynomials diminish the $\Lambda$-variation. In the proof we use idea from \cite{Adell_Cal_Bernstein_operators_diminish_variation} to use the probabilistic representation of the Bernstein operators. We conclude from this fact that the Kantorovich operators also diminish the $\Lambda$-variation (as far as we know, there does not exist a probabilistic representation of the Kantorovich operators satisfying the desired condition). In the third section we deal with related notions of $\Lambda$-variation. Finally, in the fourth section we apply the $\Lambda$-variation diminishing property to characterize $C\Lambda BV_c$ as the closure of the space of polynomials in the $\|\cdot\|_\Lambda$ norm and therefore we partially answer the question asked by Waterman in \cite{Waterman_On_Lambda_bounded_variation} about characterization of the space $\Lambda BV_c$. Among others, this question has been already answered by Prus-Wi\'sniowski but we will look at the problem from different point of view. A new proof of the separability of $C\Lambda BV_c$ and a new characterization of compactness in this space are also given.

\section{The $\Lambda$-variation diminishing property}
Let $(\Omega, \mathfrak{M}, P)$ be a probability space, $(X_k)_k$ a sequence of independent random variables defined on $\Omega$ and uniformly distributed on the interval $[0,1]$. For every natural $k$ and $x\in [0,1]$ the indicator function $I_{(X_k\leq x)}$ is a random variable on $\Omega$ and therefore:
$$S_n(x)=\sum_{k=0}^n I_{(X_k\leq x)}$$
is a random variable. If $x\in [0,1]$ and $k$ is a natural number, then the indicator function $I_{(X_k\leq x)}$ takes two values: $1$ with probability $x$ and $0$ with probability $1-x$. It is quite clear, that $S_n(x)$ has the binomial distribution with parameters $n, x$ i.e. takes value $k$, $0\leq k\leq n$, with probability ${n\choose k}x^k(1-x)^{n-k}$. Finally, define the random variable:
$$Z_n^x=\frac{S_n(x)}{n}$$
Notice that $Z_n^x$ takes values $\frac{k}{n}$, for $0\leq k\leq n$ with probability ${n\choose k}x^k(1-x)^{n-k}$.

Take any Borel measurable function $f:[0,1]\rightarrow\mathbb{R}$. The random variable $f(Z^x_n)$ is a simple, measurable function which takes values $f(\frac{k}{n})$, for $0\leq k\leq n$ with probability ${n\choose k}x^k(1-x)^{n-k}$  and therefore:
\begin{equation}
\label{probabilistic_representation}
\mathbb{E}f(Z^x_n)=\sum_{k=0}^n f\left(\frac{k}{n}\right){n\choose k}x^k(1-x)^{n-k}=B_nf(x)
\end{equation}
where $B_nf$ denotes the $n$-th Bernstein polynomial of the function $f$. Inequality $x_1\leq x_2$ implies that the set $(X_k\leq x_1)$ is contained in the set $(X_k\leq x_2)$ for every natural $k$, hence $I_{(X_k\leq x_1)}\leq I_{(X_k\leq x_2)}$ and eventually:
\begin{equation}
\label{inequality_probabilistic_representation}
Z_n^{x_1}\leq Z_n^{x_2}
\end{equation}
for $x_1\leq x_2$, what is a crucial property in our paper. The probabilistic representation was, according to \cite{Adell_Cal_Bernstein_operators_diminish_variation}, firstly used by Lindvall in \cite{Lindvall_Bernstein_polynomials_law_of_large_numbers} to deduce some properties of the Bernstein polynomials. As an example let us take a look at the following proposition which will be used in the next section.

\begin{prop}
\label{preservation_monotonicity}
If $f$ is nondecreasing (nonincreasing), then $B_nf$ is nondecreasing (nonincreasing).
\end{prop}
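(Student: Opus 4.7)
The plan is to deduce monotonicity of $B_nf$ directly from the coupling given by the probabilistic representation, so essentially no computation is needed.

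Fix $x_1,x_2\in[0,1]$ with $x_1\leq x_2$. The key input is inequality \eqref{inequality_probabilistic_representation}, which asserts that $Z_n^{x_1}(\omega)\leq Z_n^{x_2}(\omega)$ for every $\omega\in\Omega$ (not merely in distribution). Here one uses that the random variables $Z_n^{x_1}$ and $Z_n^{x_2}$ are built from the \emph{same} underlying uniform sample $(X_k)_k$, which is precisely the point of passing through the probabilistic representation rather than working with the binomial distribution abstractly.

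Assuming $f$ is nondecreasing, I would then apply $f$ pointwise on $\Omega$ to obtain
\[
f(Z_n^{x_1}(\omega))\leq f(Z_n^{x_2}(\omega))\quad\text{for every }\omega\in\Omega,
\]
and take expectations. Monotonicity of the Lebesgue integral together with the identity \eqref{probabilistic_representation} gives
\[
B_nf(x_1)=\mathbb{E}f(Z_n^{x_1})\leq\mathbb{E}f(Z_n^{x_2})=B_nf(x_2),
\]
so $B_nf$ is nondecreasing. The nonincreasing case follows by reversing the inequality, or equivalently by applying the result just proved to $-f$ and using linearity of $B_n$.

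There is no real obstacle here: all the work has already been done in setting up the monotone coupling $(Z_n^x)_{x\in[0,1]}$, and the proposition is a one-line consequence of it. The only thing worth flagging is that one should emphasize the coupling is pointwise on $\Omega$ rather than merely in distribution, since otherwise the monotonicity of $f$ could not be used under the expectation.
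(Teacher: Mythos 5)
Your proof is correct and is exactly the argument the paper has in mind: the paper's own proof reads simply ``Trivial from (\ref{probabilistic_representation}) and (\ref{inequality_probabilistic_representation})'', and what you have written is the honest expansion of that one line, including the important remark that the coupling $Z_n^{x_1}\leq Z_n^{x_2}$ holds pointwise on $\Omega$ and not merely in distribution. Nothing to add.
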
 
\begin{proof} Trivial from (\ref{probabilistic_representation}) and (\ref{inequality_probabilistic_representation}).
\end{proof}
\noindent More theorems and proofs in this spirit, as the preservation of convexity, may be found in \cite{Adell_Cal_Using_stochastic_processes_for_studying_Bernstein_operators} or \cite{Lindvall_Bernstein_polynomials_law_of_large_numbers}.

We are ready to show the $\Lambda$-variation diminishing property of the Bernstein polynomials. The proof of the theorem below follows the lines of the proof of the analogous result in \cite{Adell_Cal_Bernstein_operators_diminish_variation}. In fact, the authors of the paper \cite{Adell_Cal_Bernstein_operators_diminish_variation} proved this statement for a wide class of Bernstein-type operators with a probabilistic representations, i.e. represented as the mean value of $f$ composed with some double-indexed stochastic process $\{Z_n^x: n\in\mathbb{N}, x\in[0,1]\}$ satisfying (\ref{inequality_probabilistic_representation}). Moreover, they do not restrict only to operators acting on spaces of functions defined on the interval $[0,1]$. Examples of such operators are listed in \cite{Adell_Cal_Bernstein_operators_diminish_variation}.

Note that:
$$V_\Lambda(f)=\sup\sum_{i=1}^{m-1}\frac{|f(x_{j+1})-f(x_j)|}{\lambda_{\beta(j)}}$$
where supremum is taken over all finite partitions $0\leq x_1\leq x_2\leq ...\leq x_m\leq 1$ of the interval $[0,1]$ and permutations $\beta$ of the set $\{1,2,...,m-1\}$. This observation is used in several papers to simplify a notation of proofs (for example \cite{Prus_Variation_Hausdorff_distance}, \cite{Prus_Separability_continuous_functions_contunuous_Lambda_variation}, \cite{Prus_Absolute_continuity}, the proof of this fact in \cite{Bugajewski_Nonlinear_operators}). 

\begin{thr}
\label{Bernstein_operators_diminish_lambda_variation}
If $\Lambda$ is a $\Lambda$-sequence, $f\in\Lambda BV$, then $V_\Lambda(B_nf)\leq V_\Lambda(f)$.
\end{thr}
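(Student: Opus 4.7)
The plan is to use the probabilistic representation (\ref{probabilistic_representation}) together with the monotonicity property (\ref{inequality_probabilistic_representation}) and the permutation-based reformulation of $V_\Lambda$ recalled just before the theorem.

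First, I would fix an arbitrary partition $0\le x_1\le x_2\le\ldots\le x_m\le 1$ and a permutation $\beta$ of $\{1,\ldots,m-1\}$, and try to bound the single sum
$$\sum_{j=1}^{m-1}\frac{|B_nf(x_{j+1})-B_nf(x_j)|}{\lambda_{\beta(j)}}$$
by $V_\Lambda(f)$. Using (\ref{probabilistic_representation}), write each difference as $B_nf(x_{j+1})-B_nf(x_j)=\mathbb{E}\bigl[f(Z_n^{x_{j+1}})-f(Z_n^{x_j})\bigr]$, pull the absolute value inside the expectation by Jensen/triangle inequality, and pull the finite sum inside as well. This yields
$$\sum_{j=1}^{m-1}\frac{|B_nf(x_{j+1})-B_nf(x_j)|}{\lambda_{\beta(j)}}\le \mathbb{E}\sum_{j=1}^{m-1}\frac{|f(Z_n^{x_{j+1}})-f(Z_n^{x_j})|}{\lambda_{\beta(j)}}.$$

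Next, I would fix $\omega\in\Omega$ and look at the integrand pointwise. By (\ref{inequality_probabilistic_representation}), the values $Z_n^{x_1}(\omega)\le Z_n^{x_2}(\omega)\le\ldots\le Z_n^{x_m}(\omega)$ form a nondecreasing sequence in $[0,1]$, so the closed intervals $[Z_n^{x_j}(\omega),Z_n^{x_{j+1}}(\omega)]$ form a finite sequence of nonoverlapping intervals in $[0,1]$. By the permutation characterization of $V_\Lambda(f)$ recalled just above the theorem, the inner sum is therefore bounded by $V_\Lambda(f)$ for every $\omega$. Taking expectation preserves this bound, and then taking the supremum over partitions and permutations gives $V_\Lambda(B_nf)\le V_\Lambda(f)$.

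The only subtlety I foresee is the exchange of $\mathbb{E}$ with the absolute value and with the finite sum, but both are immediate: the triangle inequality for expectations handles the first, and linearity handles the second. The measurability of $\sum_j|f(Z_n^{x_{j+1}})-f(Z_n^{x_j})|/\lambda_{\beta(j)}$ is automatic since $f$ is Borel measurable and each $Z_n^{x_j}$ is a random variable, and boundedness of $f\in\Lambda BV$ justifies integrability. The real content of the argument is the pointwise step, which is exactly where the monotonicity (\ref{inequality_probabilistic_representation}) converts an abstract partition in the domain of $B_nf$ into an admissible partition for computing $V_\Lambda(f)$.
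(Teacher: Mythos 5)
Your proposal is correct and follows essentially the same route as the paper's own proof: the probabilistic representation, pulling the weighted sum of absolute differences inside the expectation, the pointwise use of the monotonicity $Z_n^{x_1}(\omega)\leq\ldots\leq Z_n^{x_m}(\omega)$ to invoke the permutation characterization of $V_\Lambda(f)$, and then integrating the pointwise bound. The paper additionally justifies that $f\in\Lambda BV$ is Borel (via Perlman's theorem on one-sided limits) before using the representation, a point you assert rather than prove, but since $Z_n^x$ has finite range the composition $f(Z_n^x)$ is a simple random variable in any case, so this does not affect the validity of your argument.
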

\begin{proof}
Function $f$ has one-sided limits in every point of its domain (see Theorem 4 in \cite{Perlman_Functions_of_generalized_variation}), what implies that it is a Borel function and hence representation (\ref{probabilistic_representation}) holds for $B_nf$. Take $0\leq x_1\leq x_2\leq ...\leq x_m\leq 1$. From (\ref{inequality_probabilistic_representation}) we have that $Z_n^{x_1}\leq...\leq Z_n^{x_m}$. Let $\beta$ be any permutation of a set $\{1, 2,..., m-1\}$. Then:
$$\sum_{j=1}^{m-1}\frac{|B_nf(x_{j+1})-B_nf(x_j)|}{\lambda_{\beta(j)}}=\sum_{j=1}^{m-1}\frac{|\mathbb{E}f(Z_n^{x_{j+1}})-\mathbb{E}f(Z_n^{x_j})|}{\lambda_{\beta(j)}}\leq$$
$$\leq \mathbb{E}\left(\sum_{j=1}^{m-1}\frac{|f(Z_n^{x_{j+1}})-f(Z_n^{x_j})|}{\lambda_{\beta(j)}}\right)$$
Now, $Z_n^{x_1}(\omega)\leq...\leq Z_n^{x_m}(\omega)$ are numbers from interval $[0,1]$ for all $\omega\in\Omega$ and hence for every $\omega\in\Omega$:
$$\sum_{j=1}^{m-1}\frac{|f(Z_n^{x_{j+1}}(\omega))-f(Z_n^{x_j}(\omega))|}{\lambda_{\beta(j)}}\leq  V_\Lambda(f)$$
and therefore:
$$\mathbb{E}\left(\sum_{j=1}^{m-1}\frac{|f(Z_n^{x_{j+1}})-f(Z_n^{x_j})|}{\lambda_{\beta(j)}}\right)\leq \int_{\Omega}V_\Lambda(f)dP=V_\Lambda(f).$$
Finally:
$$\sum_{j=1}^{m-1}\frac{|B_nf(x_{j+1})-B_nf(x_j)|}{\lambda_{\beta(j)}} \leq V_\Lambda(f)$$
for arbitrary $0\leq x_1\leq...\leq x_m\leq 1$ and permutation $\beta$, what implies that $V_\Lambda(B_nf)\leq V_\Lambda(f)$.
\end{proof}

Now we are going to conclude from Theorem \ref{Bernstein_operators_diminish_lambda_variation} the $\Lambda$-variation diminishing property of the Kantorovich polynomials. Recall that if $f\in\Lambda BV$, then $f$ is a bounded Borel function, therefore it is integrable, hence the Kantorovich polynomials of this function may be considered. We start with an easy observation:

\begin{prop}
\label{prop2}
If $f\in L^1[0,1]$, $[a,b]\subseteq [0,1]$ then there exist $x_1, x_2\in (a,b)$ such that:
$$\frac{1}{b-a}\int_a^bf(t)dt\leq f(x_1),\qquad and\\ \qquad \frac{1}{b-a}\int_a^bf(t)dt\geq f(x_2).\\$$
\end{prop}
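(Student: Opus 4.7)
The plan is to prove each of the two inequalities separately, by contradiction; by symmetry (or by applying the first case to $-f$) the two cases are identical, so I focus on producing $x_1 \in (a,b)$ with $f(x_1) \geq \mu$, where $\mu := \frac{1}{b-a}\int_a^b f(t)\,dt$ denotes the mean value.

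Suppose the claim fails, i.e.\ $f(x) < \mu$ for every $x \in (a,b)$. Then $g := \mu - f$ is strictly positive on all of $(a,b)$. Writing $(a,b) = \bigcup_{n\geq 1}\{g > 1/n\}$ and using that $(a,b)$ has Lebesgue measure $b-a > 0$, countable subadditivity yields some $n_0$ with $m\{g > 1/n_0\} > 0$, whence
\[
\int_a^b g(t)\,dt \;\geq\; \tfrac{1}{n_0}\, m\{g>1/n_0\} \;>\; 0.
\]
On the other hand, by the definition of $\mu$,
\[
\int_a^b g(t)\,dt \;=\; \mu(b-a) - \int_a^b f(t)\,dt \;=\; 0,
\]
contradicting the previous line. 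The argument for $x_2$ is identical after reversing the inequalities (or by passing to $-f$).

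I do not anticipate any genuine obstacle: this is essentially the familiar fact that an integrable function cannot be strictly below (resp.\ above) its own average everywhere on a set of positive measure. The only cosmetic subtlety is that $f$ is merely an element of $L^1$, so the pointwise values $f(x_1), f(x_2)$ depend on the chosen representative; but in the intended applications (functions in $\Lambda BV$, which by the result of Perlman cited earlier admit one-sided limits at every point) a canonical pointwise representative is available and this is a non-issue.
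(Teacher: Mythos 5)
Your proof is correct. Note that the paper offers no proof of this proposition at all---it is introduced with ``We start with an easy observation'' and the argument is omitted---so there is no official argument to compare against; your contradiction argument (if $f<\mu$ at every point of $(a,b)$, then $g=\mu-f$ is measurable and strictly positive on a set of positive measure, yet $\int_a^b g\,dt=0$) is the standard way to fill this gap, with the second inequality following by applying the first to $-f$. Your closing remark about representatives is also apt: the statement presupposes a pointwise-defined integrable function, which is exactly what is available when the proposition is applied to $f\in\Lambda BV$ in the proof of Theorem \ref{Kantorovich_operators_diminish_lambda_variation}.
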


We say that a point $x\in (0,1)$ is a point of varying monotonicity of a function $f:[0,1]\rightarrow\mathbb{R}$ if there is no neighbourhood of $x$ on which $f$ is strictly monotone or constant. The points 0 and 1 are said to be points of varying monotonicity of $f$ if $f$ is non-constant on every neighbourhood of 0 or 1, respectively. The set of all points of varying monotonicity of a function $f$ will be denoted by $M_f$. According to \cite{Prus_Variation_Hausdorff_distance}, this concept was introduced in \cite{Bruckner_Differentiability_change_variables}.

\begin{thr}
\label{Kantorovich_operators_diminish_lambda_variation}
If $\Lambda$ is a $\Lambda$-sequence, $f\in\Lambda BV$, then $V_\Lambda(K_nf)\leq V_\Lambda(f)$.
\end{thr}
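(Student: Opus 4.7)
The plan is to reduce the theorem to Theorem \ref{Bernstein_operators_diminish_lambda_variation} by exhibiting a companion function $g:[0,1]\to\mathbb{R}$ with $K_nf=B_ng$ and $V_\Lambda(g)\le V_\Lambda(f)$. The natural candidate comes from the Kantorovich sampling: set
$$g_k:=g\!\left(\tfrac{k}{n}\right)=(n+1)\int_{k/(n+1)}^{(k+1)/(n+1)}f(t)\,dt$$
for $k=0,\ldots,n$, and extend $g$ to $[0,1]$ as a step function constant on each cell $[\tfrac{k}{n},\tfrac{k+1}{n})$. Applied to $g$, formula (\ref{probabilistic_representation}) gives $B_ng=K_nf$ at once, so everything reduces to proving $V_\Lambda(g)\le V_\Lambda(f)$.

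For this I would use Proposition \ref{prop2}: for each $k$ there exist $y_k^-,y_k^+\in\bigl(\tfrac{k}{n+1},\tfrac{k+1}{n+1}\bigr)$ with $f(y_k^-)\le g_k\le f(y_k^+)$, so $g_k=p_kf(y_k^+)+(1-p_k)f(y_k^-)$ for a suitable $p_k\in[0,1]$. Introduce mutually independent $\{+,-\}$-valued random variables $\varepsilon_0,\ldots,\varepsilon_n$ with $\mathbb{P}(\varepsilon_k=+)=p_k$, so that $g_k=\mathbb{E}f(y_k^{\varepsilon_k})$. Fix a finite partition of $[0,1]$ and a permutation $\beta$; since $g$ is constant on each cell and $\Lambda$ is nondecreasing, we may assume without loss of generality that the partition points lie in pairwise distinct cells, relabelled so that $x_j\in[\tfrac{k_j}{n},\tfrac{k_j+1}{n})$ with $k_1<\cdots<k_m$. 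Exactly as in the proof of Theorem \ref{Bernstein_operators_diminish_lambda_variation}, the triangle inequality for expectations yields
$$\sum_{j=1}^{m-1}\frac{|g_{k_{j+1}}-g_{k_j}|}{\lambda_{\beta(j)}}\le\mathbb{E}\sum_{j=1}^{m-1}\frac{\bigl|f(y_{k_{j+1}}^{\varepsilon_{k_{j+1}}})-f(y_{k_j}^{\varepsilon_{k_j}})\bigr|}{\lambda_{\beta(j)}}.$$
For every realisation of $(\varepsilon_k)$ the points $y_{k_1}^{\varepsilon_{k_1}}<\cdots<y_{k_m}^{\varepsilon_{k_m}}$ form a strictly increasing sequence in $[0,1]$, because the open cells $\bigl(\tfrac{k}{n+1},\tfrac{k+1}{n+1}\bigr)$ are disjoint and naturally ordered; hence the inner sum is bounded by $V_\Lambda(f)$, and so is its expectation. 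Combining with Theorem \ref{Bernstein_operators_diminish_lambda_variation} finishes the argument: $V_\Lambda(K_nf)=V_\Lambda(B_ng)\le V_\Lambda(g)\le V_\Lambda(f)$.

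The main obstacle is the absence of continuity of $f$: a naive reduction would want a single $\xi_k$ in each cell with $f(\xi_k)=g_k$, but no intermediate-value property is available in $\Lambda BV$. Proposition \ref{prop2} supplies only an upper and a lower representative of $g_k$, and using them separately for consecutive differences risks producing overlapping intervals in $[0,1]$, which would block an appeal to the definition of $V_\Lambda(f)$. The randomisation over $\varepsilon_k$ is the device that removes this obstruction: at each realisation one representative per cell is chosen, automatically giving an ordered sequence of points, while averaging restores the correct value $g_k=\mathbb{E}f(y_k^{\varepsilon_k})$.
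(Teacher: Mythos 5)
Your proof is correct, and while it follows the paper's overall reduction, it handles the crucial inequality by a genuinely different argument. Both proofs define the same companion function $g$ via the Kantorovich means $g(\tfrac{k}{n})=(n+1)\int_{k/(n+1)}^{(k+1)/(n+1)}f(t)\,dt$, observe $K_nf=B_ng$, and invoke Theorem \ref{Bernstein_operators_diminish_lambda_variation}, so that everything rests on $V_\Lambda(g)\le V_\Lambda(f)$; whether $g$ is extended piecewise linearly (the paper) or as a step function (you) is immaterial, since $B_n$ sees only the values $g(\tfrac{k}{n})$. The paper proves this inequality deterministically: it first normalizes the partition, restricting to points of varying monotonicity of $g$ (i.e.\ the knots $\tfrac{k}{n}$, via Proposition 1.1 of Prus-Wi\'sniowski) and merging monotone runs so that consecutive values of $g$ alternate, and only then applies Proposition \ref{prop2}, choosing the upper representative at local maxima and the lower one at local minima; the alternation is precisely what makes each difference of $g$-values dominated term by term by the corresponding $|f(t_{k+1})-f(t_k)|$. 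You avoid this combinatorial normalization entirely by randomizing: each $g_k$ is written exactly as $\mathbb{E}f(y_k^{\varepsilon_k})$, a mixture of the two representatives supplied by Proposition \ref{prop2}, and then the expectation/triangle-inequality argument from the proof of Theorem \ref{Bernstein_operators_diminish_lambda_variation} is repeated verbatim, using that every realization produces a strictly increasing sequence of points in the disjoint ordered cells; the only reduction you need is discarding partition points lying in a common cell, which is the standard reindexing permitted by the monotonicity of $\Lambda$ (and, incidentally, independence of the $\varepsilon_k$ is never used --- linearity of expectation suffices). What each approach buys: yours unifies the Bernstein and Kantorovich theorems under a single probabilistic scheme and sidesteps the alternation bookkeeping, including the merging inequality involving $\min\{\lambda_{\beta(k)},\lambda_{\beta(k+1)}\}$ and the appeal to points of varying monotonicity; the paper's argument, in exchange, is purely deterministic and elementary once those normalization facts are granted.
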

\begin{proof}
Let us define a function $g$ as a piecewise linear function such that:
$$g\left(\frac{k}{n}\right)=(n+1)\int_{\frac{k}{n+1}}^{\frac{k+1}{n+1}}f(t)dt,\quad k=0,1,...,n\\$$
It is clear that $K_nf=B_ng$ and therefore it sufficies to prove that $V_\Lambda(g)\leq V_\Lambda(f)$, since $V_\Lambda(K_nf)=V_\Lambda(B_ng)\leq V_\Lambda(g)$ from Theorem \ref{Bernstein_operators_diminish_lambda_variation}.

It is trivial if $g$ is constant. Take any partition $0=x_1<...<x_m=1$ of $[0,1]$ such that $|g(x_{i+1})-g(x_i)|>0$ for $i=1,...,m-1$. We may assume that $x_k\in M_g$ for $k=1,...,m$ (see Proposition 1.1 in \cite{Prus_Variation_Hausdorff_distance}) and therefore that $x_k=\frac{i_k}{n}$ for certain natural $i_k\leq n$ and every $k=1,...,m$, since only such points are points of varying monotonicity of $g$. Moreover, we may assume that for $k=1,...,m-2$ the sequence $g(\frac{i_k}{n}), g(\frac{i_{k+1}}{n}), g(\frac{i_{k+2}}{n})$ is not monotone. Indeed, if it is monotone then for an arbitrary permutation $\beta$ of the set $\{1,...,m-1\}$:
$$\frac{|g(\frac{i_{k+1}}{n})-g(\frac{i_{k}}{n})|}{\lambda_{\beta(k)}}+\frac{|g(\frac{i_{k+2}}{n})-g(\frac{i_{k+1}}{n})|}{\lambda_{\beta(k+1)}}\leq \frac{|g(\frac{i_{k+2}}{n})-g(\frac{i_{k}}{n})|}{\min\{\lambda_{\beta(k+1)}, \lambda_{\beta(k)}\}}.$$

Now, take $k=1,...,m$. If $g(\frac{i_k}{n})$ is greater than values of $g$ at neighbouring points of our partition, then using Proposition \ref{prop2} we define $t_k$ as a point from $(\frac{i_k}{n+1}, \frac{i_{k}+1}{n+1})$ such that:
$$f(t_k)\geq (n+1)\int_{\frac{i_k}{n+1}}^{\frac{i_{k}+1}{n+1}}f(t)dt=g\left(\frac{i_k}{n}\right).$$
Similarly, if $g(\frac{i_k}{n})$ is less than values of $g$ at neighbouring points of our partition, then we define $t_k$ as a point from $(\frac{i_k}{n+1}, \frac{i_{k}+1}{n+1})$ such that:
$$f(t_k)\leq (n+1)\int_{\frac{i_k}{n+1}}^{\frac{i_{k}+1}{n+1}}f(t)dt=g\left(\frac{i_k}{n}\right).$$
Obviously, $0\leq t_1\leq t_2\leq...\leq t_m\leq 1$ and:
$$\sum_{k=1}^{m-1} \frac{|g(\frac{i_{k+1}}{n})-g(\frac{i_{k}}{n})|}{\lambda_{\beta(k)}}\leq \sum_{k=1}^{m-1} \frac{|f(t_{k+1})-f(t_{k})|}{\lambda_{\beta(k)}}.$$
The initial partition and the permutation $\beta$ were arbitrary, hence $V_\Lambda(g)\leq V_\Lambda(f)$.
\end{proof}

\section{Other notions of $\Lambda$-variation}
Apart from the regular $\Lambda$- and $\varphi$-variation presented in the previous sections there are considered also other, related concepts of variation. Some of them will be described below.

If $\mathcal{I}$ is a finite sequence of intervals contained in $[0,1]$, then let us denote by $\|\mathcal{I}\|$ the length of the longest interval in this sequence. Further, define for a function $f:[0,1]\rightarrow\mathbb{R}$:
$$V_{\Lambda, \delta}(f)=\sup\sigma_\Lambda \left(f,\mathcal{I}\right)$$
where the supremum is taken over all finite sequences $\mathcal{I}$ of nonoverlapping closed intervals with $\|\mathcal{I}\|\leq\delta$, contained in $[0,1]$. The following quantity is called the Wiener $\Lambda$-variation of a function $f:[0,1]\rightarrow\mathbb{R}$:
$$W_\Lambda(f)=\lim_{\delta\to 0+} V_{\Lambda, \delta}(f).$$
Similarly, define:
$$V_{\varphi, \delta}(f)=\sup \sum_{i=1}^{n-1} \varphi\left(|f(x_{i+1})-f(x_{i})|\right)$$
where supremum is taken over all finite sequences $0\leq x_1<...<x_n\leq 1$ with $x_{i+1}-x_{i}\leq\delta$. The value:
$$V^*_\varphi(f)=\lim_{\delta\to 0+} V_{\varphi, \delta}(f).$$
is called the fine $\varphi$-variation of $f$.

In \cite{Adell_Cal_Bernstein_operators_diminish_variation} the authors prove under some assumptions on $\varphi$ and $f$ that:
$$V^*_\varphi(B_nf)\leq V^*_\varphi(f)$$
This observation becomes trivial in the case of the $\Lambda$-variation of a function defined on the interval $[0,1]$, due to the following proposition:

\begin{prop}
\label{polynomials_wiener_variation}
If $\Lambda$ is a proper $\Lambda$-sequence, $f:[0,1]\rightarrow\mathbb{R}$ is a Lipschitz function with Lipschitz constant $L$, then $W_\Lambda(f)=0$.
\end{prop}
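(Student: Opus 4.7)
The plan is to exploit the two-sided control we have: Lipschitz continuity gives $|f(I_k)| \le L |I_k|$, and properness of $\Lambda$ means $1/\lambda_k$ becomes small for large $k$. We split the sum defining $\sigma_\Lambda(f,\mathcal{I})$ into a ``head'' of bounded length and a ``tail'' where $1/\lambda_k$ is small, and we estimate each piece separately.

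Fix $\varepsilon > 0$ and a sequence $\mathcal{I} = \langle I_1, \dots, I_m\rangle$ of nonoverlapping closed intervals in $[0,1]$ with $\|\mathcal{I}\| \le \delta$. Using Lipschitz continuity,
$$\sigma_\Lambda(f,\mathcal{I}) \;=\; \sum_{k=1}^m \frac{|f(I_k)|}{\lambda_k} \;\le\; L \sum_{k=1}^m \frac{|I_k|}{\lambda_k}.$$
Choose an integer $N$ (to be specified) and split at $N$. For the head, estimate each $|I_k| \le \delta$:
$$L\sum_{k=1}^{N} \frac{|I_k|}{\lambda_k} \;\le\; L\delta \sum_{k=1}^{N} \frac{1}{\lambda_k}.$$
For the tail, use monotonicity of $\Lambda$ (so $\lambda_k \ge \lambda_{N+1}$ when $k \ge N+1$) together with $\sum_{k \ge N+1} |I_k| \le 1$, which holds since the $I_k$ are nonoverlapping subintervals of $[0,1]$:
$$L \sum_{k=N+1}^{m} \frac{|I_k|}{\lambda_k} \;\le\; \frac{L}{\lambda_{N+1}} \sum_{k=N+1}^{m} |I_k| \;\le\; \frac{L}{\lambda_{N+1}}.$$

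Now I pick the parameters in the correct order: since $\lambda_n \to \infty$, first choose $N$ so large that $L/\lambda_{N+1} < \varepsilon/2$; then, with this $N$ fixed, choose $\delta > 0$ so small that $L\delta \sum_{k=1}^N 1/\lambda_k < \varepsilon/2$. Combining the two estimates yields $\sigma_\Lambda(f,\mathcal{I}) < \varepsilon$ uniformly over all admissible $\mathcal{I}$, hence $V_{\Lambda,\delta}(f) \le \varepsilon$. Letting $\varepsilon \to 0$ gives $W_\Lambda(f) = 0$.

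There is no real obstacle here; the only subtlety worth flagging is the order of quantifiers (choose $N$ first, $\delta$ afterwards), and the observation that nonoverlapping subintervals of $[0,1]$ have total length at most $1$, which is precisely what controls the tail independently of $m$.
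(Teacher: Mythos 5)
Your proof is correct and follows essentially the same route as the paper's: a Lipschitz bound $|f(I_k)|\leq L|I_k|$, a head/tail split of the sum at a fixed index, the tail controlled by $\sum_k |I_k|\leq 1$ together with $\lambda_n\to\infty$, and the quantifiers chosen in the same order (index first, then $\delta$). The only cosmetic difference is that you bound the head by $L\delta\sum_{k=1}^{N}1/\lambda_k$ where the paper uses the cruder $mL\delta/\lambda_1$; this changes nothing of substance.
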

\begin{proof} Take $\varepsilon>0$. Let $m$ be such that $\frac{L}{\lambda_m}<\frac{\varepsilon}{2}$ and $\delta>0$ such that $\frac{mL\delta}{\lambda_1}<\frac{\varepsilon}{2}$. Let $\mathcal{I}=\langle I_1,..., I_k\rangle$ be any finite sequence of intervals such that $\|\mathcal{I}\|\leq\delta$. If $k>m$ then:
$$\sum_{j=1}^k\frac{|f(I_j)|}{\lambda_j}=\sum_{j=1}^m\frac{|f(I_j)|}{\lambda_j}+\sum_{j=m+1}^k\frac{|f(I_j)|}{\lambda_j}\leq$$
$$\leq \frac{mL\delta}{\lambda_1}+\frac{L}{\lambda_m}\sum_{j=m+1}^k|I_j|\leq\frac{mL\delta}{\lambda_1}+\frac{L}{\lambda_m} <\varepsilon.$$
Similarly if $k\leq m$.
\end{proof}

One may ask if it is true that $V_{\delta, \Lambda}(B_nf)\leq V_{\delta, \Lambda}(f)$ for every $\delta>0$ and natural $n$. Suprisingly, we can give a counterexample to this claim. Let:

$$
f(x) = \left\{ \begin{array}{ll}
1.5x & \textrm{$0\leq x\leq \frac{1}{3}$}\\
0.5 & \textrm{$\frac{1}{3}<x\leq\frac{2}{3}$}\\
1.5x-0.5 & \textrm{$\frac{2}{3}<x\leq 1$}
\end{array} \right.
$$

The function $f$ is a piecewise linear function such that $f(0)=0, f(\frac{1}{3})=0.5, f(\frac{2}{3})=0.5, f(1)=1$. We shall prove that $V_{\delta, \Lambda}(B_nf)> V_{\delta, \Lambda}(f)$ for every natural $n$, $\Lambda$ such that $\lambda_1<\lambda_2$ and $1>\delta>\frac{2}{3}$. 

First, we are going to prove that the partition $\mathcal{I}=\langle [0,\delta], [\delta,1]\rangle$ has the property that $\sigma_\Lambda(f,\mathcal{I})=V_{\Lambda, \delta}(f)$. Let $\omega$ be the modulus of continuity of $f$, $\omega(\varepsilon)=\sup\{|f(t_1)-f(t_2)|: |t_1-t_2|\leq\varepsilon\}$ for $\varepsilon>0$. Observe that from the monotonicity and the continuity of $f$, there exists $x_0\leq 1-\delta<\frac{1}{3}$ such that $\omega(\delta)=f(x_0+\delta)-f(x_0)$. Obviously, $x_0+\delta\geq \frac{2}{3}$ and therefore:
$$\omega(\delta)=f(x_0+\delta) - f(x_0) = 1.5(x_0+\delta)-0.5 - 1.5 x_0 = 1.5\delta - 0.5 = |f(\delta)-f(0)|.$$

\begin{obs}
\label{observation_optimal_partition}
If $\mathcal{I'}=\langle I_1, I_2,..., I_m\rangle$ is an arbitrary sequence of nonoverlapping, closed intervals with $\|\mathcal{I'}\|\leq \delta$, then: $$\sigma_\Lambda(f,\mathcal{I'})\leq\sigma_\Lambda(f,\mathcal{I})=\frac{|f(\delta)-f(0)|}{\lambda_1}+\frac{|f(1)-f(\delta)|}{\lambda_2}.$$
\end{obs}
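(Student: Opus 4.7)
The plan is to reduce the statement to a one-variable linear optimization in the size of the largest jump. First I would note that since $\Lambda$ is nondecreasing, the weights $1/\lambda_j$ are nonincreasing, so by the rearrangement inequality the value $\sigma_\Lambda(f,\mathcal{I}')$ can only grow if I reindex the intervals in $\mathcal{I}'$ so that $|f(I_1)|\geq|f(I_2)|\geq\cdots\geq|f(I_m)|$; this relabeling is legitimate because the definition of $\sigma_\Lambda$ allows any ordering of the chosen intervals (cf.~the remark on permutations $\beta$ recorded just before Theorem \ref{Bernstein_operators_diminish_lambda_variation}). Write $a_j=|f(I_j)|$ for the reordered values.

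Next I would record two elementary bounds on the $a_j$. Since $f$ is nondecreasing on $[0,1]$, its total variation equals $f(1)-f(0)=1$, and because the intervals are nonoverlapping this yields $\sum_{j=1}^{m} a_j\leq 1$. Moreover $|I_j|\leq\delta$ forces $a_j\leq\omega(\delta)$, and in particular $a_1\leq\omega(\delta)=|f(\delta)-f(0)|$ by the identity already established in the paragraph preceding the observation. Bounding $1/\lambda_j\leq 1/\lambda_2$ for every $j\geq 2$, I obtain
\[
\sigma_\Lambda(f,\mathcal{I}')=\sum_{j=1}^{m}\frac{a_j}{\lambda_j}\leq\frac{a_1}{\lambda_1}+\frac{1}{\lambda_2}\sum_{j\geq 2}a_j\leq\frac{a_1}{\lambda_1}+\frac{1-a_1}{\lambda_2}=a_1\left(\frac{1}{\lambda_1}-\frac{1}{\lambda_2}\right)+\frac{1}{\lambda_2}.
\]

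Since $\lambda_1<\lambda_2$ by hypothesis, the coefficient of $a_1$ is strictly positive, so the right-hand side is an increasing linear function of $a_1\in[0,\omega(\delta)]$ and therefore attains its maximum at $a_1=\omega(\delta)$. Substituting gives exactly $|f(\delta)-f(0)|/\lambda_1+|f(1)-f(\delta)|/\lambda_2=\sigma_\Lambda(f,\mathcal{I})$, as required. I expect the only genuinely subtle point to be the very first step, namely recognizing that the intervals of $\mathcal{I}'$ can be relabeled and thus paired optimally with the nonincreasing weights $1/\lambda_j$; once this reordering is in place, the proof collapses to the linear estimate above and a one-variable monotonicity check, with no need to analyze which specific intervals achieve the extremum.
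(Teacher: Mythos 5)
Your proof is correct and is essentially the paper's argument in different clothing: both rest on the same three facts ($|f(I_1)|\leq\omega(\delta)=|f(\delta)-f(0)|$, monotonicity giving $\sum_j|f(I_j)|\leq 1=|f(\delta)-f(0)|+|f(1)-f(\delta)|$, and $\lambda_1\leq\lambda_2\leq\lambda_j$), with your one-variable maximization in $a_1$ being exactly the paper's chain of inequalities read in reverse. The initial rearrangement step is superfluous (the bound $|f(I_1)|\leq\omega(\delta)$ and the estimate $1/\lambda_j\leq 1/\lambda_2$ for $j\geq 2$ hold for the intervals in their given order), but it is harmless.
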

\begin{proof} Obviously $|f(I_1)| \leq \omega(\delta)=|f(\delta)-f(0)|$ and therefore:
$$\frac{|f(\delta)-f(0)|-|f(I_1)|}{\lambda_1}\geq \frac{|f(\delta)-f(0)|-|f(I_1)|}{\lambda_2}.$$
From the monotonicity of $f$ we get $|f(I_1)|+|f(I_2)|+...+|f(I_m)|\leq 1$ and $|f(\delta)-f(0)|+|f(1)-f(\delta)|=1$. Using both this facts:
$$\frac{|f(\delta)-f(0)|}{\lambda_1}+\frac{|f(1)-f(\delta)|}{\lambda_2}=$$
$$=\frac{|f(I_1)|}{\lambda_1}+\frac{|f(1)-f(\delta)|}{\lambda_2}+\frac{|f(\delta)-f(0)|-|f(I_1)|}{\lambda_1}\geq$$
$$\geq \frac{|f(I_1)|}{\lambda_1}+\frac{|f(1)-f(\delta)|}{\lambda_2}+\frac{|f(\delta)-f(0)|-|f(I_1)|}{\lambda_2}=$$
$$=\frac{|f(I_1)|}{\lambda_1}+\frac{1-|f(I_1)|}{\lambda_2}\geq$$
$$\geq\frac{|f(I_1)|}{\lambda_1}+\frac{|f(I_2)|+|f(I_3)|+...+|f(I_m)|}{\lambda_2}\geq \sigma_\Lambda(f,\mathcal{I'}).\qedhere$$
\end{proof}

\noindent Observation \ref{observation_optimal_partition} allows us to claim that:
\begin{equation}
\label{optimal_partition}
V_{\delta, \Lambda}(f)=\frac{|f(\delta)-f(0)|}{\lambda_1}+\frac{|f(1)-f(\delta)|}{\lambda_2}
\end{equation}

\begin{obs}
\label{inequality_values_delta}
$B_nf(\delta)>f(\delta).$
\end{obs}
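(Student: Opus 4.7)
The plan is to exploit the fact that $f$ lies on or above the affine function $\ell(y) = 1.5y - 0.5$ on the whole interval $[0,1]$, and that $\ell$ agrees with $f$ precisely on $[2/3, 1]$. First I would verify by inspecting the three linear pieces of $f$ that $f(y) \geq \ell(y)$ for all $y \in [0,1]$, with equality if and only if $y \in [2/3, 1]$: on $[0,1/3]$ this reduces to $1.5y \geq 1.5y - 0.5$; on $(1/3, 2/3]$ it reduces to $0.5 \geq 1.5y - 0.5$, which holds since $y \leq 2/3$; on $(2/3, 1]$ the two functions coincide. In particular, since $\delta > 2/3$, we have $f(\delta) = \ell(\delta)$.

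Next I would apply the probabilistic representation (\ref{probabilistic_representation}), namely $B_nf(\delta) = \mathbb{E}f(Z_n^\delta)$, together with the pointwise inequality $f \geq \ell$. Because $\ell$ is affine and $\mathbb{E}Z_n^\delta = \delta$, linearity of expectation yields
$$B_nf(\delta) = \mathbb{E}f(Z_n^\delta) \geq \mathbb{E}\ell(Z_n^\delta) = \ell(\delta) = f(\delta).$$
To upgrade this to a strict inequality, I would observe that $Z_n^\delta$ attains the value $0$ with probability $(1-\delta)^n > 0$ (since $\delta < 1$), while $f(0) - \ell(0) = 0 - (-0.5) = 0.5 > 0$. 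Hence the $k=0$ term in the expanded expectation contributes strictly, which forces $B_nf(\delta) > f(\delta)$.

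The argument is essentially routine once one notices that the line carrying the rightmost linear piece of $f$ serves as a global support line from below for $f$ on $[0,1]$. The only point requiring any care is to confirm that the set $\{y : f(y) > \ell(y)\}$ meets the support of $Z_n^\delta$ with positive probability; this is immediate from $P(Z_n^\delta = 0) = (1-\delta)^n > 0$, so no real obstacle arises.
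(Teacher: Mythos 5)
Your proof is correct, and it departs from the paper's precisely at the decisive step, namely how the weak inequality is upgraded to a strict one. Both arguments begin the same way in substance: you observe that the affine function $\ell(y)=1.5y-0.5$ supports $f$ from below, with equality exactly on $[2/3,1]$, so that $B_nf\geq B_n\ell=\ell$ and $f(\delta)=\ell(\delta)$; the paper does the same with $g=\ell$, phrased via positivity and linearity of $B_n$ rather than via the probabilistic representation. For strictness, however, the paper argues by contradiction: if $B_nf(\delta)=f(\delta)$, then $B_n(f-g)(\delta)=0$, and since $f-g$ is nonincreasing, Proposition \ref{preservation_monotonicity} forces the nonnegative polynomial $B_n(f-g)$ to vanish on all of $[\delta,1]$, hence identically, contradicting $B_n(f-g)(0)=f(0)-g(0)=0.5$. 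You instead give a direct, quantitative argument: the nonnegative random variable $f(Z_n^\delta)-\ell(Z_n^\delta)$ equals $0.5$ on the event $\{Z_n^\delta=0\}$, which has probability $(1-\delta)^n>0$ because $\delta<1$, whence $B_nf(\delta)-f(\delta)\geq 0.5\,(1-\delta)^n>0$; equivalently, in non-probabilistic terms, the $k=0$ term of the Bernstein sum for $f-\ell$ already contributes $0.5\,(1-\delta)^n$ and all other terms are nonnegative. Your route is shorter and more elementary --- it needs neither the preservation of monotonicity nor the fact that a polynomial vanishing on an interval vanishes identically --- and it even produces an explicit lower bound on the gap. What the paper's route buys is independence from the explicit weights: it would apply verbatim to any positive linear operator reproducing affine functions, endpoint values, and monotonicity, without having to locate a specific atom of $Z_n^\delta$ on which the defect $f-\ell$ is bounded away from zero.
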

\begin{proof} Let us notice that the Bernstein operators are positive, i. e. nonnegativity of a function $h$ implies nonnegativity of $B_nh$ and therefore if we put $g(x)=1.5x-0.5$, then $f-g\geq 0$ and $B_n(f-g)\geq 0$. From linearity of the Bernstein operator $B_nf\geq B_ng=g$. The last equality follows from the fact that $B_nh=h$ for an arbitrary linear function $h$ (see the beginning of the chapter 3.2 in \cite{Farouki_Bernstein_polynomial_basis}). In particular we have $B_nf(\delta)\geq f(\delta)=g(\delta)$, since $\delta>\frac{2}{3}$.

Now it sufficies to show that the assumption $B_nf(\delta)=f(\delta)$ leads to a contradiction. Note that this equality implies that $B_nf(\delta)=g(\delta)=B_ng(\delta)$. Using the monotonicity of $f-g$ and the preservation of monotonicity by the Bernstein polynomials (Proposition \ref{preservation_monotonicity}) we know that $B_n(f-g)(\delta)\geq B_n(f-g)(x)\geq 0$ for $x\geq\delta$, but then $B_n(f-g)(x)=0$ for $x\geq \delta$, since $B_n(f-g)(\delta)=0$, and thereby $B_n(f-g)(x)=0$ for all $x\in [0,1]$. But observe that $f(0)>g(0)$, hence $B_nf(0)>B_ng(0)$ directly from the definition of the Bernstein polynomials, what is a desired contradiction.
\end{proof}

\noindent From Observation \ref{inequality_values_delta}:
\begin{equation}
\label{final_inequality}
\left|B_nf(\delta)-B_nf(0)\right|-\left|f(\delta)-f(0)\right|=\big(B_nf(\delta)-B_nf(0)\big)-\big(f(\delta)-f(0)\big)>0
\end{equation}
Using the monotonicity of $f$ and the preservation of monotonicity by $B_n$:
\begin{equation}
\label{sum_up_to}
|f(1)-f(\delta)|+|f(\delta)-f(0)|=1 \qquad |B_nf(1)-B_nf(\delta)|+|B_nf(\delta)-B_nf(0)|=1\\
\end{equation}

\noindent Now:
$$\left(\frac{|B_nf(\delta)-B_nf(0)|}{\lambda_1}+\frac{|B_nf(1)-B_nf(\delta)|}{\lambda_2}\right)-\left(\frac{|f(\delta)-f(0)|}{\lambda_1}+\frac{|f(1)-f(\delta)|}{\lambda_2}\right)=$$
$$=\frac{|B_nf(\delta)-B_nf(0)|-|f(\delta)-f(0)|}{\lambda_1}-\frac{|f(1)-f(\delta)|-|B_nf(1)-B_nf(\delta)|}{\lambda_2}>$$
$$>\frac{|B_nf(\delta)-B_nf(0)|-|f(\delta)-f(0)|}{\lambda_2}-\frac{|f(1)-f(\delta)|-|B_nf(1)-B_nf(\delta)|}{\lambda_2}=0.$$

\noindent In the inequality above we needed the assumption that $\lambda_1<\lambda_2$ and (\ref{final_inequality}). The last equality follows from (\ref{sum_up_to}). Finally $V_{\Lambda, \delta}(B_nf)>V_{\Lambda, \delta}(f)$ from (\ref{optimal_partition}).

\begin{remark}
In \cite{Bugajewska_Lower_variation} the authors defined so-called lower $\Lambda$-variation. For a real valued function $f$ on $[0,1]$ we define the lower $\Lambda$-variation of $f$ as:
$$\underline{\textrm{var}}_\Lambda(f)=\inf\{V_\Lambda(g): f=g\ \textrm{a.e.}\}$$
Let:
$$\underline{\Lambda BV}=\{f\in L^1: \underline{\textrm{var}}_\Lambda(f)<\infty\}.$$
This space is a Banach space with the norm $\|f\|_{\underline{\Lambda}}=\|f\|_1+\underline{\textrm{var}}_\Lambda(f)$. A function $f\in\Lambda BV$ is called a good representative of its equivalence class in the space $L^1$ if $V_\Lambda(f)=\underline{\textrm{var}}_\Lambda(f)$.

Note that the Kantorovich operator is well-defined on this space. Theorem \ref{Kantorovich_operators_diminish_lambda_variation} tells us that:
$$V_\Lambda(K_nf)\leq \underline{\textrm{var}}_\Lambda(f)$$
for $f\in\underline{\Lambda BV}$. Moreover, Theorem 8 in \cite{Bugajewska_Lower_variation} states that continuous functions are  good representatives and therefore:
$$\underline{\textrm{var}}_\Lambda(K_nf)\leq\underline{\textrm{var}}_\Lambda(f)$$
i.e. the Kantorovich operators diminish the lower $\Lambda$-variation.
\end{remark}

\section{Applications in $\Lambda BV$ spaces}
We begin this section with several propositions generalizing Lorentz's results described in \cite{Lorentz_Bernstein_polynomials}, Theorems 1.7.1 and 1.7.2. Let us define for nonempty $K\subseteq [0,1]$ and $f:[0,1]\rightarrow\mathbb{R}$:
$$V_\Lambda(f, K)=\sup\sigma_\Lambda\left(f,\mathcal{I}\right)$$
where supremum is taken over all finite sequences of nonoverlapping closed intervals whose endpoints are in $K$. The following statement is a straightforward generalization of the analogous result in the case of the regular variation. The elementary proof will be omitted.

\begin{prop}
\label{variation_pointwise_limit}
If $\Lambda$ is a $\Lambda$-sequence, $K\subseteq[0,1]$ is nonempty, $f_n(x)\to f(x)$ for every $x\in K$, then:
$$V_\Lambda(f, K)\leq\liminf_{n\to\infty} V_\Lambda (f_n, K).$$
In particular, if $f$ is not of bounded $\Lambda$-variation, then $\lim_{n\to\infty} V_\Lambda (f_n)=\infty$.
\end{prop}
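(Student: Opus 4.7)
The plan is to prove this by the standard lower semicontinuity argument: pass to the limit inside the supremum defining $V_\Lambda(\cdot,K)$, exploiting that $\sigma_\Lambda(f,\mathcal{I})$ involves only finitely many point evaluations of $f$.

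First, I would fix an arbitrary finite sequence $\mathcal{I}=\langle I_1,\ldots,I_m\rangle$ of nonoverlapping closed intervals with all endpoints in $K$. For each $k$, write $I_k=[a_k,b_k]$ with $a_k,b_k\in K$. By hypothesis $f_n(a_k)\to f(a_k)$ and $f_n(b_k)\to f(b_k)$, so $|f_n(I_k)|=|f_n(b_k)-f_n(a_k)|\to|f(b_k)-f(a_k)|=|f(I_k)|$. Because $\mathcal{I}$ is finite, this gives $\sigma_\Lambda(f_n,\mathcal{I})\to\sigma_\Lambda(f,\mathcal{I})$ as $n\to\infty$.

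Next, since $\sigma_\Lambda(f_n,\mathcal{I})\leq V_\Lambda(f_n,K)$ for every $n$, passing to the $\liminf$ yields
$$\sigma_\Lambda(f,\mathcal{I})=\lim_{n\to\infty}\sigma_\Lambda(f_n,\mathcal{I})\leq\liminf_{n\to\infty}V_\Lambda(f_n,K).$$
Taking the supremum over all admissible $\mathcal{I}$ on the left-hand side gives the desired inequality $V_\Lambda(f,K)\leq\liminf_{n\to\infty}V_\Lambda(f_n,K)$.

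For the ``in particular'' clause, set $K=[0,1]$, so $V_\Lambda(f,K)=V_\Lambda(f)=\infty$ by assumption. The inequality just proved forces $\liminf_{n\to\infty}V_\Lambda(f_n)=\infty$, whence $\lim_{n\to\infty}V_\Lambda(f_n)=\infty$. There is no real obstacle here; the only subtle point worth noting is that the argument relies on $\mathcal{I}$ being finite so that convergence of finitely many pointwise terms transfers to the sum $\sigma_\Lambda$, which is why the statement is phrased in terms of pointwise convergence on the specific set $K$ containing the endpoints used.
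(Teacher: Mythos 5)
Your proof is correct, and it is precisely the elementary lower semicontinuity argument that the paper itself alludes to when it writes ``the elementary proof will be omitted'': fix a finite system $\mathcal{I}$ with endpoints in $K$, pass to the limit in the finitely many point evaluations, and take the supremum at the end. Your handling of the ``in particular'' clause is also the right reading --- it only makes sense when the pointwise convergence holds with $K=[0,1]$ (or more generally when $V_\Lambda(f,K)=\infty$), and you made that restriction explicit.
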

Obviously, the Bernstein polynomials of a continuous function $f$ tend to $f$ pointwise and therefore we have the following proposition. Recall that $\|f\|_\Lambda=V_\Lambda(f)+|f(0)|$.
\begin{prop}
\label{continuity_lambda_norm_continuous}
If $\Lambda$ is a $\Lambda$-sequence, $f\in C\Lambda BV$, then:
$$\lim_{n\to\infty} \|B_nf\|_\Lambda=\|f\|_\Lambda.$$
\end{prop}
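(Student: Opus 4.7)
The plan is to combine the two ingredients already at hand: Theorem \ref{Bernstein_operators_diminish_lambda_variation} gives the inequality $V_\Lambda(B_nf)\leq V_\Lambda(f)$ in one direction, and Proposition \ref{variation_pointwise_limit} will give the reverse inequality in the limit. Since $\|g\|_\Lambda = V_\Lambda(g) + |g(0)|$, I will deal with the constant term and the seminorm term separately.

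First, I would observe that $B_nf(0) = f(0)$ for every $n$, directly from the definition of the Bernstein polynomial (only the $k=0$ term survives at $x=0$ and its coefficient is $1$). Hence $|B_nf(0)| = |f(0)|$ for all $n$, and the problem reduces to showing that $V_\Lambda(B_nf) \to V_\Lambda(f)$.

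Next, for the upper bound, Theorem \ref{Bernstein_operators_diminish_lambda_variation} immediately yields
$$\limsup_{n\to\infty} V_\Lambda(B_nf) \leq V_\Lambda(f).$$
For the matching lower bound I invoke the classical Weierstrass--Bernstein theorem: since $f\in C\Lambda BV\subseteq C[0,1]$, the polynomials $B_nf$ converge uniformly (in particular pointwise) to $f$ on $[0,1]$. Applying Proposition \ref{variation_pointwise_limit} with $K=[0,1]$ gives
$$V_\Lambda(f) \leq \liminf_{n\to\infty} V_\Lambda(B_nf).$$
Chaining these two inequalities produces $\lim_{n\to\infty} V_\Lambda(B_nf) = V_\Lambda(f)$, and adding $|f(0)|$ to both sides completes the argument.

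There is essentially no obstacle here: the proof is a one-line sandwich, and both sides of the sandwich are stated as earlier results in the paper. The only thing worth being slightly careful about is that Proposition \ref{variation_pointwise_limit} is formulated for pointwise convergence on a set $K$, so one must simply note that $K=[0,1]$ is allowed and that $V_\Lambda(f,[0,1]) = V_\Lambda(f)$ (and similarly for $B_nf$), which is immediate from the definition. Continuity of $f$ is used only to guarantee the pointwise convergence $B_nf(x)\to f(x)$; no further regularity of $f$ is required.
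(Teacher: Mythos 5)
Your proof is correct and follows essentially the same route as the paper: the sandwich of Theorem \ref{Bernstein_operators_diminish_lambda_variation} (for the $\limsup$ bound) and Proposition \ref{variation_pointwise_limit} applied to the pointwise (indeed uniform) convergence $B_nf\to f$ (for the $\liminf$ bound), together with the observation that $B_nf(0)=f(0)$. No gaps; your extra remark that $V_\Lambda(f,[0,1])=V_\Lambda(f)$ is a harmless clarification the paper leaves implicit.
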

\begin{proof} Theorem \ref{Bernstein_operators_diminish_lambda_variation} implies that $\limsup_{n\to\infty}V_\Lambda(B_nf)\leq V_\Lambda(f)$. Proposition \ref{variation_pointwise_limit}, however, yields that $\liminf_{n\to\infty} V_\Lambda(B_nf)\geq V_\Lambda(f)$.  The conclusion follows, since $B_nf(0)=f(0)$ for every natural $n$.
\end{proof}

\begin{prop}
\label{charaterization_continuous_lambda_bv}
If $\Lambda$ is a $\Lambda$-sequence, then:
$$C\Lambda BV=\{f\in C[0,1]: \sup_{n\in\mathbb{N}}\|B_nf\|_\Lambda<\infty\}.$$
\end{prop}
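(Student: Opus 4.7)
The plan is to prove the two inclusions separately, each being a short deduction from results already established in the excerpt.

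For the inclusion $C\Lambda BV\subseteq\{f\in C[0,1]:\sup_n\|B_nf\|_\Lambda<\infty\}$, I would take $f\in C\Lambda BV$ and invoke Theorem \ref{Bernstein_operators_diminish_lambda_variation} to get $V_\Lambda(B_nf)\leq V_\Lambda(f)$ for every $n$. Since $B_nf(0)=f(0)$ (this is immediate from the definition of $B_nf$), I conclude $\|B_nf\|_\Lambda\leq\|f\|_\Lambda<\infty$ uniformly in $n$.

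For the reverse inclusion, I would take $f\in C[0,1]$ with $M:=\sup_n\|B_nf\|_\Lambda<\infty$ and argue that $f\in\Lambda BV$, which together with continuity places $f$ in $C\Lambda BV$. By the classical Bernstein approximation theorem, $B_nf(x)\to f(x)$ for every $x\in[0,1]$ (in fact uniformly). Applying Proposition \ref{variation_pointwise_limit} with $K=[0,1]$ gives
\[
V_\Lambda(f)\leq\liminf_{n\to\infty}V_\Lambda(B_nf)\leq\sup_{n\in\mathbb{N}}\|B_nf\|_\Lambda=M<\infty,
\]
which completes the argument.

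There is no real obstacle here: the theorem is essentially the conjunction of the $\Lambda$-variation diminishing property (which controls the Bernstein polynomials from above in terms of $f$) and the lower semicontinuity of $V_\Lambda$ under pointwise limits (which controls $f$ from above in terms of the Bernstein polynomials). The only point worth double-checking is that the hypothesis $f\in C[0,1]$ is used precisely to guarantee pointwise convergence $B_nf\to f$, so that Proposition \ref{variation_pointwise_limit} can be applied; without continuity the right-hand set would a priori be larger.
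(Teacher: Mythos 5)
Your proof is correct and takes essentially the same route as the paper: one inclusion follows from Theorem \ref{Bernstein_operators_diminish_lambda_variation} together with $B_nf(0)=f(0)$, and the other from Proposition \ref{variation_pointwise_limit} applied to the pointwise convergence $B_nf\to f$ of the Bernstein polynomials of a continuous function. The only cosmetic difference is that the paper phrases the second inclusion contrapositively (if $f\notin\Lambda BV$, then $V_\Lambda(B_nf)\to\infty$), whereas you use the lower-semicontinuity inequality directly; these are the same argument.
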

\begin{proof} If $f\in\Lambda BV$, then $\sup_n\|B_nf\|_\Lambda\leq \|f\|_\Lambda$. If $f\not\in\Lambda BV$, then $\lim_{n\to\infty} V_\Lambda (B_nf)=\infty$ from Proposition \ref{variation_pointwise_limit}.
\end{proof}

In Propositions \ref{continuity_lambda_norm_continuous} and \ref{charaterization_continuous_lambda_bv} we had to restrict to continuous functions, since discontinuous functions  in general do not have the property that $B_nf\to f$ pointwise. In the case of discontinuous functions it is necessary to add some further assumptions on $f$.

For arbitrary $f:[0,1]\rightarrow\mathbb{R}$, let $K_f$ be the set of points of continuity of $f$, and 0 and 1. Further, denote by $\mathfrak{A}$ the set of all bounded functions with one-sided limits at every point of $(0,1)$ and such that $\min\{f(x-), f(x+)\}\leq f(x)\leq\max\{f(x-), f(x+)\}$. If $f\in\Lambda BV$ for some $\Lambda$-sequence $\Lambda$, then $f$ admits the first one of these properties but not necessarily the second one (Theorem 4 in \cite{Perlman_Functions_of_generalized_variation}). Observe that from the first property we may conclude that the set of points of discontinuity of a function from $\mathfrak{A}$ is at most countable. Indeed, if $f\in\mathfrak{A}$, then this set is the union of the sets $E_n=\{x\in[0,1]:|f(x+)-f(x-)|>\frac{1}{n}\}$ for $n=1,2,...$ and each $E_n$ is finite from the compactness of $[0,1]$ and the definintion of the family $\mathfrak{A}$.

We are ready to formulate the next lemma.

\begin{lem}
\label{variation_points_continuity}
If $\Lambda$ is a $\Lambda$-sequence, $f\in\mathfrak{A}$, then $V_\Lambda(f, K_f)=V_\Lambda(f)$.
\end{lem}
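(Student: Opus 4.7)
The inclusion $V_\Lambda(f, K_f) \leq V_\Lambda(f)$ is immediate from the definitions, since every finite sequence of nonoverlapping intervals with endpoints in $K_f$ is a legitimate competitor in the supremum defining $V_\Lambda(f)$. For the reverse inequality, fix $\varepsilon > 0$ and an arbitrary finite nonoverlapping sequence $\mathcal{I} = \langle I_1, \ldots, I_m \rangle$ in $[0,1]$, writing $I_k = [a_k, b_k]$. It suffices to construct some $\mathcal{I}'$ with endpoints in $K_f$ satisfying $\sigma_\Lambda(f, \mathcal{I}') \geq \sigma_\Lambda(f, \mathcal{I}) - \varepsilon$, and then take the supremum over $\mathcal{I}$.

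The plan is to perturb every endpoint of $\mathcal{I}$ that fails to lie in $K_f$ to a nearby point of $K_f$. Since $f \in \mathfrak{A}$ has only countably many discontinuities in $(0,1)$ (as noted in the paragraph preceding the lemma), the set $K_f$ is dense in $[0,1]$, so suitable perturbations exist on either side of any interior discontinuity. The delicate choice is which side to take, and it rests on the following convexity observation: for any fixed $\alpha \in \mathbb{R}$ and $\lambda > 0$, the map $y \mapsto |y - \alpha|/\lambda$ is convex, so any finite sum of such maps is convex. For a non-shared discontinuity endpoint $b_k$ (the case $a_k$ is symmetric), apply this to $\phi(y) := |y - f(a_k)|/\lambda_k$: by the $\mathfrak{A}$ hypothesis, $f(b_k)$ lies between $f(b_k-)$ and $f(b_k+)$, so convexity yields $\max\{\phi(f(b_k-)), \phi(f(b_k+))\} \geq \phi(f(b_k))$. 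Perturbing $b_k$ into $K_f$ from whichever side realises this maximum, and using the existence of the corresponding one-sided limit, yields a $b_k' \in K_f$ for which the new interval's contribution differs from the original by at most a preassigned $\eta_k > 0$.

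The hard part is a \emph{shared} endpoint $x = b_k = a_{k+1}$, because the two incident intervals may individually favour opposite sides of $x$, and opposite choices would force the perturbed intervals to overlap. We bypass this by applying the same convexity trick to the joint contribution: the function $\phi(y) := |y - f(a_k)|/\lambda_k + |f(b_{k+1}) - y|/\lambda_{k+1}$ is still convex, so $\max\{\phi(f(x-)), \phi(f(x+))\} \geq \phi(f(x))$. Accordingly we perturb \emph{both} occurrences of $x$ to the side realising this maximum, using two slightly different magnitudes on that same side so that the perturbed intervals remain nonoverlapping. Taking all perturbations sufficiently small (smaller than half of every gap between the distinct non-shared endpoints of $\mathcal{I}$) guarantees that $\mathcal{I}'$ is a nonoverlapping sequence of closed intervals, that all of its endpoints lie in $K_f$, and that $\sigma_\Lambda(f, \mathcal{I}') \geq \sigma_\Lambda(f, \mathcal{I}) - \varepsilon$. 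Since $\mathcal{I}$ and $\varepsilon$ were arbitrary, $V_\Lambda(f) \leq V_\Lambda(f, K_f)$, which closes the proof.
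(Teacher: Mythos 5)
Your per-endpoint convexity rule is sound in isolation, but the way the individual perturbations are combined contains a genuine gap: you choose the side for each endpoint by applying the convexity inequality with the \emph{original} values of the other endpoints, and when both endpoints of the same interval are discontinuities, these independently made choices can interact destructively. Concretely, take an interval $[a,b]$ with $f(a)=0$, $f(a-)=-\tfrac12$, $f(a+)=3$, and $f(b)=1$, $f(b-)=0$, $f(b+)=3$; such an $f\in\mathfrak{A}$ is easy to build on $[0,1]$ (constant $-\tfrac12$ to the left of $a$, continuous on $(a,b)$ sliding from $3$ down to $0$, constant $3$ to the right of $b$). The original contribution is $|f(b)-f(a)|/\lambda_k=1/\lambda_k$. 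Your rule for $b$ maximizes $|y-f(a)|=|y|$ over $\{0,3\}$ and sends $b$ to the right, where $f\approx 3$; your (symmetric) rule for $a$ maximizes $|f(b)-y|=|1-y|$ over $\{-\tfrac12,3\}$ and \emph{also} sends $a$ to the right, where $f\approx 3$. The perturbed contribution is then $\approx 0$: both endpoints chase the same far one-sided limit and the term collapses, a loss of almost $1/\lambda_k$ that no preassigned $\eta_k$ controls. So the step asserting that each perturbation changes the contribution by at most $\eta_k$ fails when the side choices are made simultaneously from the original data, which is how your construction reads.

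The gap is repairable inside your framework: perturb the endpoints \emph{sequentially} (one endpoint, or one shared pair, at a time), applying the convexity inequality at each step with the current, already-updated values of the remaining endpoints. Then each step lowers the running sum by at most a prescribed amount, all endpoints end up in $K_f$, the nonoverlapping structure is preserved by your smallness safeguard, and the total loss is at most $\varepsilon$; in the example above, whichever endpoint is processed second correctly flees to the opposite side. It is worth seeing how the paper evades this coherence problem by a different device: it first reduces, as in the proof of Theorem \ref{Kantorovich_operators_diminish_lambda_variation}, to partitions in which no three consecutive values are monotone, so that every node is a local maximum or local minimum among its neighbours. The perturbation bounds are then one-sided in the favourable direction ($f(t_k)\geq f(x_k)-\eta$ at local maxima, $f(t_k)\leq f(x_k)+\eta$ at local minima), and such one-sided bounds compose safely even when all nodes are moved \emph{simultaneously}, precisely because adjacent nodes alternate. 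Your convexity idea, once made sequential, is a genuine alternative: it needs no reduction to alternating configurations, it works directly for arbitrary interval sequences (including non-shared endpoints, which the paper's partition formulation never has to face), and the joint-convexity treatment of shared endpoints is a nice touch; but as written the argument proves less than it claims.
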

\begin{proof} It is trivial when $f$ is constant. Obviously, $V_\Lambda(f, K_f)\leq V_\Lambda(f)$. Let $0=x_1<...<x_m=1$ be an arbitrary partition of $[0,1]$ such that $|f(x_{k+1})-f(x_k)|>0$ for $k=1,...,m-1$. Using the same argument as in the proof of Theorem \ref{Kantorovich_operators_diminish_lambda_variation} we may assume that  for $k=1,...,m-2$ the sequence $f(x_k), f(x_{k+1}), f(x_{k+2})$ is not monotone. Put:
$$\delta=\frac{1}{2}\min_{i=1,...,m-1}\left|x_{i+1}-x_{i}\right|.$$
Take $\varepsilon>0$ such that:
\begin{equation}
\label{epsilon}
\varepsilon<\min_{i=1,...,m-1} \frac{|f(x_{i+1})-f(x_i)|}{\lambda_1}.
\end{equation}
We construct a new partition $t_1<t_2<...<t_m$ in the following way: let $t_1=x_1$, $t_m=x_m$. If $x_k$ is a point of continuity of $f$, then put $t_k=x_k$, $k=2,...,m-1$. If $x_k$ for $k=2,...,m-1$ is a point of discontinuity, then let $t_k$ be any point of continuity such that $|x_k-t_k|<\delta$ and:
\begin{enumerate}[(i)]
\item $f(t_k)\geq f(x_k)-\frac{\lambda_1\varepsilon}{2(m-1)}$ if $f(x_k)$ is greater than values of $f$ at neighbouring points of the partition $x_1<...<x_m$
\item $f(t_k)\leq f(x_k)+\frac{\lambda_1\varepsilon}{2(m-1)}$ otherwise
\end{enumerate}
It is possible, since $\limsup_{t\to x_k}f(t)\geq f(x_k), \liminf_{t\to x_k}f(t)\leq f(x_k)$ for $k=2,...,m-1$ and the set of points of discontinuity of $f$ is at most countable.

For every $k$ we have $|x_k-t_k|<\delta$, hence $t_1<t_2<...<t_m$ is a partition of $[0,1]$. Observe that:
$$\frac{\lambda_1\varepsilon}{m-1}\sum_{i=1}^{m-1}\frac{1}{\lambda_{\beta(j)}}\leq \frac{\lambda_1\varepsilon}{m-1}\cdot\frac{m-1}{\lambda_1}=\varepsilon.$$
 Further, (\ref{epsilon}) implies that for $k=1,...,m-2$ the sequence $f(t_k), f(t_{k+1}), f(t_{k+2})$ is not monotone and therefore for an arbitrary permutation $\beta$ of the set $\{1,...,m-1\}$ we have:
$$\sum_{j=1}^{m-1}\frac{|f(t_{j+1})-f(t_j)|}{\lambda_{\beta(j)}}\geq \sum_{j=1}^{m-1}\frac{|f(x_{j+1})-f(x_j)|}{\lambda_{\beta(j)}}-\frac{\lambda_1\varepsilon}{m-1}\sum_{i=1}^{m-1}\frac{1}{\lambda_{\beta(j)}}\geq$$
$$\geq\sum_{j=1}^{m-1}\frac{|f(x_{j+1})-f(x_j)|}{\lambda_{\beta(j)}}-\varepsilon.$$
Since the inequality remains valid for any partition $x_1<...<x_m$ and any $\varepsilon > 0$ we can observe that $V_\Lambda(f, K_f)\geq V_\Lambda(f)$.
\end{proof}
\noindent Proofs of the next two propositions follow from Lemma \ref{variation_points_continuity}, Proposition \ref{variation_pointwise_limit}, Theorem \ref{Bernstein_operators_diminish_lambda_variation} and the fact that $B_nf(x)\to f(x)$ at every point of continuity $x$ of the function $f$ (Theorem 1.1.1. in \cite{Lorentz_Bernstein_polynomials}).
\begin{prop}
If $\Lambda$ is a $\Lambda$-sequence, $f\in\mathfrak{A}$, then:
$$\lim_{n\to\infty} \|B_nf\|_\Lambda=\|f\|_\Lambda.$$
\end{prop}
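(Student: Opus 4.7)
The plan is to mimic the proof of Proposition \ref{continuity_lambda_norm_continuous}, but to circumvent the fact that $B_n f$ need not converge to $f$ pointwise everywhere by working instead on the set $K_f$ of points of continuity (augmented with $0$ and $1$). All the hard work has been done in Lemma \ref{variation_points_continuity}, which tells us that sampling only on $K_f$ captures the full $\Lambda$-variation.

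First I would observe that $B_n f(x) \to f(x)$ holds for every $x \in K_f$: at interior points of continuity this is the classical pointwise convergence theorem cited above, and at the endpoints $0$ and $1$ one has the stronger equality $B_n f(0) = f(0)$ and $B_n f(1) = f(1)$ for all $n$. Therefore Proposition \ref{variation_pointwise_limit} applied with $K = K_f$ yields
$$V_\Lambda(f, K_f) \;\leq\; \liminf_{n\to\infty} V_\Lambda(B_n f, K_f) \;\leq\; \liminf_{n\to\infty} V_\Lambda(B_n f),$$
the last inequality being automatic since restricting the allowed endpoints to $K_f$ only shrinks the supremum.

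Next I would apply Lemma \ref{variation_points_continuity} to the function $f \in \mathfrak{A}$ to replace the left-hand side by $V_\Lambda(f)$, obtaining $V_\Lambda(f) \leq \liminf_n V_\Lambda(B_n f)$. In the opposite direction, Theorem \ref{Bernstein_operators_diminish_lambda_variation} gives $V_\Lambda(B_n f) \leq V_\Lambda(f)$ for every $n$, hence $\limsup_n V_\Lambda(B_n f) \leq V_\Lambda(f)$. Combining the two bounds yields $\lim_{n\to\infty} V_\Lambda(B_n f) = V_\Lambda(f)$. Since $B_n f(0) = f(0)$ for all $n$, adding $|B_n f(0)| = |f(0)|$ to both sides gives the claim $\|B_n f\|_\Lambda \to \|f\|_\Lambda$.

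There is essentially no obstacle beyond being careful at the endpoints: one has to verify that $0, 1 \in K_f$ by the very definition of $K_f$, so that the pointwise convergence hypothesis of Proposition \ref{variation_pointwise_limit} genuinely holds on the relevant set. Everything else is assembly of previously established results, so the proof should be quite short.
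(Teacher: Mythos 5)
Your proof is correct and is exactly the argument the paper intends: the paper's proof is a one-sentence remark citing precisely the ingredients you assemble, namely Lemma \ref{variation_points_continuity}, Proposition \ref{variation_pointwise_limit} applied with $K=K_f$, Theorem \ref{Bernstein_operators_diminish_lambda_variation}, and the pointwise convergence $B_nf(x)\to f(x)$ at points of continuity. Your added care at the endpoints (noting $B_nf(0)=f(0)$ and $B_nf(1)=f(1)$ for all $n$, so that convergence holds on all of $K_f$ even if $f$ is discontinuous at $0$ or $1$) is a detail the paper leaves implicit, and it is handled correctly.
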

\begin{prop}
If $\Lambda$ is a $\Lambda$-sequence, $f\in\mathfrak{A}$, then $f$ is of bounded $\Lambda$-variation if and only if $\sup_n\|B_nf\|_\Lambda<\infty$.
\end{prop}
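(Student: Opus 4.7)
The plan is to handle the two implications separately, each by stitching together results already in hand. For the forward implication, suppose $f \in \Lambda BV$. Theorem \ref{Bernstein_operators_diminish_lambda_variation} gives $V_\Lambda(B_nf) \leq V_\Lambda(f)$ for every $n$, and directly from the definition of the Bernstein polynomial one has $B_nf(0) = f(0)$. Therefore $\|B_nf\|_\Lambda = V_\Lambda(B_nf) + |B_nf(0)| \leq V_\Lambda(f) + |f(0)| = \|f\|_\Lambda$, which is finite and independent of $n$, so $\sup_n \|B_nf\|_\Lambda < \infty$.

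For the reverse implication, assume $\sup_n \|B_nf\|_\Lambda = M < \infty$ and aim to conclude $V_\Lambda(f) < \infty$. The key input is that the Bernstein polynomials of a bounded function with one-sided limits converge pointwise at every continuity point (Theorem 1.1.1 in \cite{Lorentz_Bernstein_polynomials}), together with $B_nf(0) = f(0)$ and $B_nf(1) = f(1)$, so $B_nf(x) \to f(x)$ for every $x \in K_f$. Applying Proposition \ref{variation_pointwise_limit} on the set $K_f$ yields
$$V_\Lambda(f, K_f) \leq \liminf_{n\to\infty} V_\Lambda(B_nf, K_f) \leq \liminf_{n\to\infty} V_\Lambda(B_nf) \leq M < \infty,$$
where the middle inequality is immediate because taking a supremum over partitions with endpoints in $K_f \subseteq [0,1]$ can only be smaller than taking it over all partitions.

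Since $f \in \mathfrak{A}$, Lemma \ref{variation_points_continuity} applies and gives $V_\Lambda(f) = V_\Lambda(f, K_f) \leq M < \infty$, hence $f$ is of bounded $\Lambda$-variation. There is no real obstacle here: the whole argument is a direct assembly of the three cited tools, with the only point requiring a brief remark being the passage from pointwise convergence at continuity points to bounding $V_\Lambda(f)$, which is precisely the reason Lemma \ref{variation_points_continuity} was established in the preceding part of the section.
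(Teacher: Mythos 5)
Your proof is correct and follows exactly the route the paper intends: the forward direction via Theorem \ref{Bernstein_operators_diminish_lambda_variation} together with $B_nf(0)=f(0)$, and the reverse direction by combining pointwise convergence of $B_nf$ to $f$ on $K_f$ (Theorem 1.1.1 in \cite{Lorentz_Bernstein_polynomials} plus endpoint interpolation) with Proposition \ref{variation_pointwise_limit} applied on $K_f$ and Lemma \ref{variation_points_continuity}. The paper gives no more than a one-sentence citation of these same four ingredients, so your write-up is simply a fleshed-out version of the paper's argument.
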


Let us now concentrate on the problem of a convergence of sequences $(B_nf)_n$ and $(K_nf)_n$ to $f$ in the $\|\cdot\|_\Lambda$ norm and thereby on the main result of this section. First, let us give the proof of the following proposition.

\begin{prop}
If $\Lambda$ is a proper $\Lambda$-sequence, then $\Lambda BV_c$ and $C\Lambda BV_c$ are closed subspaces of $\Lambda BV$ in the $\|\cdot\|_\Lambda$ norm.
\end{prop}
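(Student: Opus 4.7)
The plan is a standard $\varepsilon$-argument exploiting subadditivity of the seminorm $V_{\Lambda_{(m)}}$ together with the fact that $V_{\Lambda_{(m)}}(g)\leq V_\Lambda(g)$ for every $g$ and every $m$, which follows immediately from the monotonicity of $\Lambda$: any partial sum weighted by $1/\lambda_{m+k}$ is bounded by the same sum weighted by $1/\lambda_k$.

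First I would prove that $\Lambda BV_c$ is closed. Take a sequence $(f_n)\subset\Lambda BV_c$ with $\|f_n-f\|_\Lambda\to 0$; in particular $f\in\Lambda BV$ since $\Lambda BV$ is a Banach space. Given $\varepsilon>0$, fix $n_0$ large enough that $V_\Lambda(f-f_{n_0})<\varepsilon/2$. Using subadditivity of $V_{\Lambda_{(m)}}$ and the observation above,
$$V_{\Lambda_{(m)}}(f)\leq V_{\Lambda_{(m)}}(f_{n_0})+V_{\Lambda_{(m)}}(f-f_{n_0})\leq V_{\Lambda_{(m)}}(f_{n_0})+V_\Lambda(f-f_{n_0})<V_{\Lambda_{(m)}}(f_{n_0})+\tfrac{\varepsilon}{2}.$$
Since $f_{n_0}\in\Lambda BV_c$, one may choose $M$ so large that $V_{\Lambda_{(m)}}(f_{n_0})<\varepsilon/2$ for all $m\geq M$, whence $V_{\Lambda_{(m)}}(f)<\varepsilon$ for such $m$. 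As $\varepsilon$ was arbitrary, $V_{\Lambda_{(m)}}(f)\to 0$, i.e.\ $f\in\Lambda BV_c$.

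For $C\Lambda BV_c$ I would invoke the remark from the introduction that convergence in the $\|\cdot\|_\Lambda$ norm implies uniform convergence; hence the uniform limit of a sequence of continuous functions is continuous. Combining this with closedness of $\Lambda BV_c$ gives that $C\Lambda BV_c=C[0,1]\cap\Lambda BV_c$ is closed in $(\Lambda BV,\|\cdot\|_\Lambda)$.

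There is no real obstacle here; the only point that is worth isolating is the monotonicity $V_{\Lambda_{(m)}}\leq V_\Lambda$, which is what makes the seminorm inequality $V_{\Lambda_{(m)}}(f-f_{n_0})\leq V_\Lambda(f-f_{n_0})$ available and lets the closeness in $\|\cdot\|_\Lambda$ be transferred to the tail quantities $V_{\Lambda_{(m)}}$.
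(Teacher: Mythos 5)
Your proof is correct and takes essentially the same approach as the paper: both rest on the triangle inequality for the seminorm $V_{\Lambda_{(m)}}$ together with the bound $V_{\Lambda_{(m)}}\leq V_\Lambda$, the only difference being that you argue sequential closedness directly while the paper argues contrapositively that the complement of $\Lambda BV_c$ is open (via the reverse triangle inequality). Your handling of $C\Lambda BV_c$ --- intersecting $\Lambda BV_c$ with the continuous functions and using that $\|\cdot\|_\Lambda$-convergence implies uniform convergence --- is likewise the paper's argument that $C\Lambda BV_c$ is an intersection of two closed subspaces.
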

\begin{proof}
For every $f\in\Lambda BV$ we have $V_{\Lambda_{(1)}}(f)\geq V_{\Lambda_{(2)}}(f)\geq...$. We deduce from this observation that if $f\not\in\Lambda BV_c$, then there exists $\varepsilon>0$ such that $V_{\Lambda_{(m)}}(f)\geq \varepsilon$ for every natural $m$.
Take an arbitrary $g\in\Lambda BV$ such that $\|f-g\|_\Lambda<\frac{\varepsilon}{2}$. Obviously, $V_{\Lambda_{(m)}}(f-g)<\frac{\varepsilon}{2}$  for every natural $m$ and therefore $V_{\Lambda_{(m)}}(g)\geq\frac{\varepsilon}{2}$ for every natural $m$ from the reverse triangle inequality (recall that the $\Lambda$-variation is a seminorm). However, this implies that $g\not\in\Lambda BV_c$, what proves that $\Lambda BV_c$ is a closed subspace of $\Lambda BV$. The space $C\Lambda BV_c$ is a closed subspace as an intersection of two closed subspaces.
\end{proof}

As it was mentioned in the introduction, the space $C\Lambda BV_c$ appeared in the theory of the $\Lambda$-variation mainly due to good properties of Fourier series of functions from this space. This led Waterman to ask about a characterization of the space $\Lambda BV_c$ (see \cite{Waterman_On_Lambda_bounded_variation}). He also expressed a suspicion (see \cite{Waterman_Bounded_variation_Fourier_series}) that not every function of bounded $\Lambda$-variation is continuous in $\Lambda$-variation. An example of such function was given initially in \cite{Fleissner_Note_Lambda_bounded_variation} but in fact the most beautiful confirmation of Waterman's conjecture was provided by F. Prus-Wi\'sniowski in \cite{Prus_Separability_continuous_functions_contunuous_Lambda_variation}, where he defined the Shao-Sablin index $S_\Lambda$ of a $\Lambda$-sequence $\Lambda$ as:
$$S_\Lambda=\limsup_{n\to\infty}\frac{\sum_{i=1}^{2n}\frac{1}{\lambda_i}}{\sum_{i=1}^{n}\frac{1}{\lambda_i}}$$
and proved the theorem (Theorem 3.1 in \cite{Prus_Separability_continuous_functions_contunuous_Lambda_variation}):
\begin{thr}[Prus-Wi\'sniowski] 
If $\Lambda$ is a proper $\Lambda$-sequence, then the following statements are equivalent:
\begin{enumerate}[(i)]
\item The space $C\Lambda BV$ is separable
\item $C\Lambda BV_c=C\Lambda BV$
\item $\Lambda BV_c=\Lambda BV$
\item $S_\Lambda<2.$
\end{enumerate}
\end{thr}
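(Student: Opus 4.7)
I would establish the cycle (iv) $\Rightarrow$ (iii) $\Rightarrow$ (ii) $\Rightarrow$ (i) $\Rightarrow$ (iv). The step (iii) $\Rightarrow$ (ii) is a one-line observation: since $C\Lambda BV = \Lambda BV \cap C[0,1]$ and $C\Lambda BV_c = \Lambda BV_c \cap C[0,1]$, equality of the larger spaces forces equality of the intersections with $C[0,1]$.

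For (ii) $\Rightarrow$ (i), which is the genuinely new contribution that the Bernstein machinery of this paper makes possible, I would prove the approximation lemma that for every $f \in C\Lambda BV_c$ one has $\|B_n f - f\|_\Lambda \to 0$. The proof rests on two ingredients. First, every tail $\Lambda_{(M)}=(\lambda_{M+1},\lambda_{M+2},\ldots)$ is itself a $\Lambda$-sequence, so Theorem \ref{Bernstein_operators_diminish_lambda_variation} applied to $\Lambda_{(M)}$ yields $V_{\Lambda_{(M)}}(B_n f) \leq V_{\Lambda_{(M)}}(f)$. Second, for any bounded $g$ and any $M\in\mathbb{N}$ I would prove the cutoff estimate
\begin{equation*}
V_\Lambda(g) \leq \frac{2M\|g\|_\infty}{\lambda_1} + V_{\Lambda_{(M)}}(g),
\end{equation*}
obtained by splitting a test system $\langle I_1,\dots,I_k\rangle$ into its first $M$ intervals (each contributing at most $2\|g\|_\infty/\lambda_1$) and the tail (which, after reindexing $j\mapsto j-M$, is a $\sigma_{\Lambda_{(M)}}$-sum). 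Applying this to $g=B_n f-f$ and using the variation-diminishing bound gives
\begin{equation*}
V_\Lambda(B_n f - f) \leq \frac{2M\|B_n f - f\|_\infty}{\lambda_1} + 2\,V_{\Lambda_{(M)}}(f).
\end{equation*}
Given $\varepsilon>0$, I pick $M$ so that the second term is $<\varepsilon/2$ (possible because $f\in\Lambda BV_c$), then $n$ large enough that the first term is $<\varepsilon/2$ (using $B_n f\to f$ uniformly, valid since $f$ is continuous). Because $B_n f(0)=f(0)$, this shows $\|B_n f - f\|_\Lambda\to 0$. Hence polynomials are dense in $C\Lambda BV_c$; under (ii), polynomials with rational coefficients form a countable dense subset of $C\Lambda BV$, proving (i).

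For (iv) $\Rightarrow$ (iii) I would invoke the classical Shao--Sablin criterion: the condition $S_\Lambda<2$ produces enough growth of $\sum_{i=1}^n 1/\lambda_i$ that, for every $f\in\Lambda BV$, the tail variations $V_{\Lambda_{(m)}}(f)$ must tend to zero. The contrapositive argument supposes there exist $\varepsilon>0$ and, for each $m$, a nonoverlapping interval system whose $\sigma_{\Lambda_{(m)}}$-sum exceeds $\varepsilon$; merging these systems across doubling scales, the Shao--Sablin estimate on ratios of partial harmonic sums produces an arrangement whose $\sigma_\Lambda$-sum diverges, contradicting $f\in\Lambda BV$.

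For (i) $\Rightarrow$ (iv) I would argue contrapositively: assuming $S_\Lambda\geq 2$, I exhibit an uncountable family in $C\Lambda BV$ pairwise separated in $\|\cdot\|_\Lambda$. The construction uses piecewise-linear bump functions supported on disjoint small intervals whose heights are tuned, via the ratio condition $S_\Lambda\geq 2$, so that the $\Lambda$-distance between any two functions indexed by distinct elements of $\{0,1\}^\mathbb{N}$ is bounded below by a fixed positive constant; an uncountable $\varepsilon$-separated set in $C\Lambda BV$ contradicts separability. The main obstacle here is converting the asymptotic condition $S_\Lambda\geq 2$ into the precise combinatorial construction of the separated family; the other directions are either routine or follow established patterns, while the Bernstein-polynomial route in (ii) $\Rightarrow$ (i) is the step that the present paper supplies in a conceptually cleaner way.
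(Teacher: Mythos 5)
First, a point of comparison that matters: the paper does not prove this theorem at all. It is quoted, with attribution, as Theorem 3.1 of Prus-Wi\'sniowski's paper \cite{Prus_Separability_continuous_functions_contunuous_Lambda_variation}, so there is no internal proof to measure yours against; any proof must be judged on its own terms. On those terms, the two implications you actually carry out are correct. The step (iii) $\Rightarrow$ (ii) is indeed immediate from $C\Lambda BV_c=\Lambda BV_c\cap C[0,1]$. Your (ii) $\Rightarrow$ (i) argument --- the cutoff estimate $V_\Lambda(g)\leq \frac{2M\|g\|_\infty}{\lambda_1}+V_{\Lambda_{(M)}}(g)$, the observation that each tail $\Lambda_{(M)}$ is again a $\Lambda$-sequence so that Theorem \ref{Bernstein_operators_diminish_lambda_variation} applies to it, and then uniform convergence of $B_nf$ to $f$ --- is sound, and it is in substance the same argument the paper uses to prove Theorem \ref{convergence_characterization} and the separability corollary (the paper uses it for those results, not for the theorem under discussion).

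The genuine gaps are the two remaining directions, which is exactly where the content of Prus-Wi\'sniowski's theorem lies. For (iv) $\Rightarrow$ (iii) you propose to ``invoke the classical Shao--Sablin criterion,'' but that criterion --- that $S_\Lambda<2$ forces $\Lambda BV=\Lambda BV_c$ --- \emph{is} the implication to be proved, so the appeal is circular; and the backup sketch fails as described: the interval systems witnessing $V_{\Lambda_{(m)}}(f)\geq\varepsilon$ for different $m$ need not be nonoverlapping with one another (they may all concentrate on the same subinterval of $[0,1]$), so they cannot simply be ``merged'' into one admissible system, and in any case every finite system has $\sigma_\Lambda$-sum at most $V_\Lambda(f)<\infty$, so a contradiction can only come from an infinite nonoverlapping arrangement that your sketch never constructs. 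For (i) $\Rightarrow$ (iv) you name the right kind of object --- an uncountable $\|\cdot\|_\Lambda$-separated family of continuous functions built from the failure of $S_\Lambda<2$ --- but you explicitly defer the construction, and that construction (choosing the scales, heights, and supports so that the separation constant survives the $\limsup$ in the definition of $S_\Lambda$) is the entire proof of this direction. As it stands you have established (iii) $\Rightarrow$ (ii) $\Rightarrow$ (i), while the equivalence of these with (iv), in both directions, remains unproved.
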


Several other characterizations of $\Lambda BV_c$ and $C\Lambda BV_c$ being an answer or partial answer to Waterman's question from \cite{Waterman_On_Lambda_bounded_variation} have been published over last three decades, for instance:

\begin{thr}[\cite{Wang_Properties_functions_Lambda_bounded_variation}]
If $\Lambda$ is a proper $\Lambda$-sequence, then $f\in\Lambda BV_c$ if and only if there exists $\Lambda$-sequence $\Gamma$ such that $\frac{\gamma_n}{\lambda_n}\to 0$ and $f\in\Gamma BV$.
\end{thr}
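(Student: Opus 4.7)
The plan is to handle the two implications separately. Throughout I will exploit the standard rearrangement observation: since $1/\lambda_n$ is nonincreasing in $n$, in the supremum defining $V_\Lambda(f)$ or $V_{\Lambda_{(m)}}(f)$ one may restrict to sequences of nonoverlapping intervals $(I_n)$ indexed so that $|f(I_1)|\geq|f(I_2)|\geq\cdots$. A direct consequence that I will use repeatedly is the tail estimate
$$\sum_{n>m}\frac{|f(I_n)|}{\lambda_n}\leq V_{\Lambda_{(m)}}(f),$$
obtained by relabelling the (still nonoverlapping) subsequence $J_j=I_{m+j}$ and applying the definition of $V_{\Lambda_{(m)}}(f)$.

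For the backward implication, suppose such $\Gamma$ exists. Given $\varepsilon>0$, fix $N$ with $\gamma_n/\lambda_n<\varepsilon$ for $n>N$. For any nonoverlapping intervals (which I may take sorted with $|f(I_n)|$ nonincreasing) split $\sum_n|f(I_n)|/\lambda_{m+n}$ at $n=N$. Using $\lambda_{m+n}\geq\lambda_n$, the tail satisfies
$$\sum_{n>N}\frac{|f(I_n)|}{\lambda_{m+n}}=\sum_{n>N}\frac{|f(I_n)|}{\gamma_n}\cdot\frac{\gamma_n}{\lambda_{m+n}}\leq\varepsilon\,V_\Gamma(f),$$
while the head is bounded by $N\gamma_1 V_\Gamma(f)/\lambda_{m+1}$ (since $|f(I)|\leq\gamma_1 V_\Gamma(f)$), which vanishes as $m\to\infty$ because $\Lambda$ is proper. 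Hence $V_{\Lambda_{(m)}}(f)\to 0$.

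The forward implication is the main obstacle. Given $f\in\Lambda BV_c$, choose $0=m_0<m_1<m_2<\cdots$ with $V_{\Lambda_{(m_k)}}(f)\leq 2^{-k}$ for $k\geq 1$, let $k_n=k$ on $B_k:=(m_{k-1},m_k]$, and set
$$\tilde\gamma_n=\lambda_n/k_n,\qquad \gamma_n=\max_{1\leq j\leq n}\tilde\gamma_j.$$
The running maximum makes $\gamma_n$ nondecreasing for free. Since $\tilde\gamma_j\leq\lambda_j\leq\lambda_n$ for $j\leq n$, one has $\gamma_n\leq\lambda_n$, so $\sum 1/\gamma_n\geq\sum 1/\lambda_n=\infty$ and $\Gamma$ is a $\Lambda$-sequence. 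For $\gamma_n/\lambda_n\to 0$, split $\max_{j\leq n}\lambda_j/(k_j\lambda_n)$ at $j=K$: indices $j\leq K$ contribute at most $\lambda_K/\lambda_n$, which is small for $n$ large by properness of $\Lambda$, while indices $j>K$ contribute at most $1/K$, which is small by choice of $K$.

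It remains to verify $V_\Gamma(f)<\infty$. On a sorted nonoverlapping sequence, using $\gamma_n\geq\tilde\gamma_n$ and organizing by blocks,
$$\sum_n\frac{|f(I_n)|}{\gamma_n}\leq\sum_n\frac{k_n|f(I_n)|}{\lambda_n}=\sum_{k\geq 1}k\sum_{n\in B_k}\frac{|f(I_n)|}{\lambda_n}\leq V_\Lambda(f)+\sum_{k\geq 2}\frac{k}{2^{k-1}},$$
where the block sum for $k\geq 2$ is controlled by the tail estimate at level $m_{k-1}$. The final series converges, giving $V_\Gamma(f)<\infty$. The delicacy lies in meeting all four requirements on $\Gamma$ at once — monotonicity, divergence of $\sum 1/\gamma_n$, $\gamma_n/\lambda_n\to 0$, and $V_\Gamma(f)<\infty$ — and the combined block-weight plus running-maximum device is what makes the first pair automatic while the geometric decay $V_{\Lambda_{(m_k)}}(f)\leq 2^{-k}$ delivers the last two.
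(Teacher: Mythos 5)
The paper does not prove this theorem at all: it is quoted as Wang's result \cite{Wang_Properties_functions_Lambda_bounded_variation} purely as background, so there is no internal proof to compare yours against, and I can only assess your argument on its own merits. It is essentially correct and complete. The backward implication is sound: the head/tail split at $N$, the bound $|f(I_n)|\leq\gamma_1 V_\Gamma(f)$ for single intervals, the monotonicity $\lambda_{m+n}\geq\lambda_n$, and properness of $\Lambda$ combine to give $V_{\Lambda_{(m)}}(f)\leq N\gamma_1 V_\Gamma(f)/\lambda_{m+1}+\varepsilon V_\Gamma(f)$, which suffices. The forward construction also works: $\gamma_n\leq\lambda_n$ guarantees $\sum_n 1/\gamma_n=\infty$, the running maximum gives monotonicity for free, and the block estimate $\sum_{n\in B_k}|f(I_n)|/\lambda_n\leq V_{\Lambda_{(m_{k-1})}}(f)\leq 2^{-(k-1)}$ is a correct application of your tail estimate (which, note, is valid for an arbitrary ordering of the intervals, so the sorting reduction is not actually needed anywhere), yielding $V_\Gamma(f)\leq V_\Lambda(f)+\sum_{k\geq 2}k\,2^{-(k-1)}<\infty$. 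One slip, cosmetic but worth fixing: in the proof that $\gamma_n/\lambda_n\to 0$, the indices $j>K$ contribute at most $1/k_{K+1}$, not $1/K$; these need not agree, since $k_{K+1}$ can be far smaller than $K$ when the $m_k$ grow quickly (e.g.\ $m_k=2^k$). The repair is immediate: given $\varepsilon>0$ choose $k_0$ with $1/k_0<\varepsilon$ and set $K=m_{k_0}$, so that $k_j>k_0$ for all $j>K$ and those indices contribute less than $\varepsilon$, while the indices $j\leq K$ contribute at most $\lambda_K/\lambda_n$, which tends to $0$ by properness of $\Lambda$.
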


\begin{thr}[\cite{Prus_Absolute_continuity}, Theorem 2]
\label{polynomials_continuous_variation}
If $\Lambda$ is a proper $\Lambda$-sequence, then $f\in C\Lambda BV_c$ if and only if $W_\Lambda(f)=0$.
\end{thr}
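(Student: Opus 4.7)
The plan is to prove both directions by decomposing an arbitrary nonoverlapping sequence of intervals into ``large'' ones (of length exceeding some threshold $\delta$) and ``small'' ones (of length at most $\delta$), exploiting that the short intervals are controlled by the Wiener variation or the modulus of continuity, while the long intervals are few in number because they must be nonoverlapping in $[0,1]$. The ``gluing'' ingredient in both halves is the monotone behaviour of $(\lambda_n)$ together with $\lambda_n\to\infty$ and $\sum 1/\lambda_n=\infty$.

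For the forward direction, suppose $f\in C\Lambda BV_c$ and fix $\varepsilon>0$. Choose $m$ with $V_{\Lambda_{(m)}}(f)<\varepsilon/2$ and, by uniform continuity of $f$, pick $\delta>0$ so that $|f(x)-f(y)|\le\lambda_1\varepsilon/(2m)$ whenever $|x-y|\le\delta$. For any nonoverlapping $\mathcal{I}=(I_1,\ldots,I_k)$ with $\|\mathcal{I}\|\le\delta$, split
$$\sigma_\Lambda(f,\mathcal{I})=\sum_{j=1}^{\min(m,k)}\frac{|f(I_j)|}{\lambda_j}+\sum_{j=m+1}^{k}\frac{|f(I_j)|}{\lambda_j}.$$
The first sum is at most $m\cdot(\lambda_1\varepsilon/(2m))/\lambda_1=\varepsilon/2$, and the second equals $\sigma_{\Lambda_{(m)}}(f,(I_{m+1},\ldots,I_k))\le V_{\Lambda_{(m)}}(f)<\varepsilon/2$. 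Hence $V_{\Lambda,\delta}(f)\le\varepsilon$ and $W_\Lambda(f)=0$.

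For the backward direction, suppose $W_\Lambda(f)=0$. I would first show $f$ is continuous: a discontinuity at some $x_0$ forces either (i) a one-sided limit $\ne f(x_0)$, giving $V_{\Lambda,\delta}(f)\ge c/\lambda_1$ for all small $\delta$ via single intervals with $x_0$ as endpoint, or (ii) the failure of some one-sided limit, in which case two interlaced subsequences converging to distinct values yield, for every $\delta>0$, arbitrarily many disjoint $\delta$-small intervals of oscillation bounded below by a fixed $c>0$, so $V_{\Lambda,\delta}(f)\ge c\sum_{n=1}^{N}1/\lambda_n\to\infty$ by $\sum 1/\lambda_n=\infty$. Either outcome contradicts $V_{\Lambda,\delta}(f)\to 0$, so $f$ is continuous, hence bounded. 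Given $\eta>0$, pick $\delta$ with $V_{\Lambda,\delta}(f)<\eta/2$. For any $m\ge 0$ and any nonoverlapping $\mathcal{J}=(J_1,\ldots,J_k)$, split the indices into $L=\{i:|J_i|>\delta\}$ (so $|L|\le\lceil 1/\delta\rceil$) and $S=\{i_1<\cdots<i_s\}=\{i:|J_i|\le\delta\}$. The contribution from $L$ is bounded by
$$\sum_{i\in L}\frac{|f(J_i)|}{\lambda_{m+i}}\le\frac{2\|f\|_\infty\lceil 1/\delta\rceil}{\lambda_{m+1}},$$
which is $<\eta/2$ once $m$ is large enough. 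For $S$, since $m+i_j\ge j$ and $(\lambda_n)$ is nondecreasing,
$$\sum_{j=1}^{s}\frac{|f(J_{i_j})|}{\lambda_{m+i_j}}\le\sum_{j=1}^{s}\frac{|f(J_{i_j})|}{\lambda_j}\le V_{\Lambda,\delta}(f)<\eta/2,$$
the last inequality because $(J_{i_j})_j$ is a nonoverlapping sequence of intervals of length $\le\delta$. Thus $V_{\Lambda_{(m)}}(f)\le\eta$ for $m$ large, giving $V_{\Lambda_{(m)}}(f)\to 0$; the case $m=0$ of the same estimate also yields $V_\Lambda(f)<\infty$, so $f\in C\Lambda BV_c$.

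The main obstacle I expect is the continuity argument in the backward direction, because we cannot assume a priori that $f\in\Lambda BV$ and therefore lack the standard existence of one-sided limits for such functions. The key trick is to exploit $\sum 1/\lambda_n=\infty$ to transform any failure of continuity, no matter how small, into a Wiener-variation contribution that cannot vanish as $\delta\to 0$.
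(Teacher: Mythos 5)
Your proof is correct, but there is nothing in the paper to compare it against: the statement is imported verbatim from Prus-Wi\'sniowski \cite{Prus_Absolute_continuity} (Theorem 2 there) and is used in this paper as a black box in the proof of Theorem \ref{convergence_characterization}. So what you have produced is a genuinely self-contained elementary proof of a cited result. It is worth noting that your two halves reuse exactly the techniques this paper deploys elsewhere: the forward direction (first $m$ intervals controlled by uniform continuity, the tail controlled by $V_{\Lambda_{(m)}}(f)$) is the same head/tail splitting that drives the paper's proof of Theorem \ref{convergence_characterization}, while the backward direction's dichotomy into few long intervals (whose contribution dies because $\lambda_{m+1}\to\infty$) and many short ones (controlled by $V_{\Lambda,\delta}(f)$, using $i_j\ge j$ and monotonicity of $\Lambda$) is the same mechanism as in Proposition \ref{polynomials_wiener_variation}. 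The one genuinely delicate point, which you identified and handled correctly, is continuity of $f$ in the backward direction: since $f\in\Lambda BV$ is not known a priori, one cannot invoke the existence of one-sided limits, and your use of $\sum_n 1/\lambda_n=\infty$ to make $V_{\Lambda,\delta}(f)$ infinite at a point of persistent oscillation (and of a single short interval with endpoint $x_0$ at a jump) is exactly what is needed. Two minor polishing remarks: in your case (ii) the phrase ``converging to distinct values'' should be read as ``oscillation bounded below by some fixed $c>0$,'' which also covers the case of an infinite one-sided limsup; and the finiteness $V_\Lambda(f)<\infty$ does follow from your $m=0$ estimate, or alternatively from boundedness of the (now known to be continuous) $f$ together with finiteness of $V_{\Lambda_{(m)}}(f)$ for a single $m$.
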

\begin{thr}[\cite{Prus_Absolute_continuity}, Theorem 3]
If $\Lambda$ is a proper $\Lambda$-sequence, then $\Lambda BV_c$ is the closure of the set of all step functions of bounded $\Lambda$-variation in the $\|\cdot\|_\Lambda$-norm.
\end{thr}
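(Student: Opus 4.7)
The proof proceeds by establishing two inclusions. For the inclusion ``closure $\subseteq \Lambda BV_c$'', since $\Lambda BV_c$ is closed in $\|\cdot\|_\Lambda$ (just proved), it suffices to show that every step function $s$ of bounded $\Lambda$-variation lies in $\Lambda BV_c$. Such $s$ has only finitely many jump points $p_1, \dots, p_k$, so in any finite nonoverlapping family of closed intervals only those meeting some $p_l$ give $|s(I)| \neq 0$, and their total is bounded by $C_s := \sum_{l=1}^{k} |s(p_l+) - s(p_l-)|$. Hence
$$V_{\Lambda_{(m)}}(s) \;\leq\; \frac{C_s}{\lambda_{m+1}} \;\longrightarrow\; 0 \quad \text{as } m \to \infty,$$
using $\lambda_n \to \infty$ by properness of $\Lambda$. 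So $s \in \Lambda BV_c$.

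For the reverse inclusion, fix $f \in \Lambda BV_c$ and $\varepsilon > 0$. Pick $N$ with $V_{\Lambda_{(N)}}(f) < \varepsilon/2$. By Perlman's theorem $f$ has one-sided limits everywhere, hence only countably many discontinuities. Choose a partition $0 = t_0 < t_1 < \dots < t_M = 1$ which includes the finitely many large jumps of $f$ and is fine enough that each cell absorbs only ``tail'' $\Lambda$-variation; the construction will keep $M$ of the same order as $N$ (e.g.\ $M = cN$). Define a step function $s$ by setting $s \equiv c_k$ on each open cell $(t_{k-1}, t_k)$, with $c_k$ chosen as a value or one-sided limit of $f$ on the cell, and $s(t_k) = f(t_k)$ at the grid points, also adjusting so that $s(0) = f(0)$; this $s$ is a step function of bounded $\Lambda$-variation.

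To estimate $V_\Lambda(f-s)$, take any nonoverlapping family $\mathcal{I}$. Using $(f-s)(t_k) = 0$ and the triangle inequality, each $I_j = [a_j,b_j] \in \mathcal{I}$ satisfies $|(f-s)(I_j)| \leq |(f-s)(a_j)| + |(f-s)(b_j)|$, and $I_j$ is either \emph{interior} (lying in a single open cell, so $s$ is constant there and $|(f-s)(I_j)| = |f(I_j)|$) or \emph{boundary} (meeting some $t_k$); there are at most $2M$ boundary intervals. Since $\lambda_n$ is nondecreasing, the ordering of $\mathcal{I}$ maximising $\sigma_\Lambda(f-s,\mathcal{I})$ is by decreasing $|(f-s)(I_j)|$; a rearrangement estimate then bounds the boundary contribution by $\sum_{k=1}^{2M} d^{\mathrm{bd}}_k/\lambda_k$ (the boundary values sorted and paired with the first $2M$ reciprocals), which the partition is chosen to make $< \varepsilon/2$, while the remaining interior terms, occupying positions $\geq 2M+1$ in the sort, contribute
$$\sum_{i} \frac{|f(I_i^{\mathrm{int}})|}{\lambda_{2M+i}} \;\leq\; V_{\Lambda_{(2M)}}(f) \;\leq\; V_{\Lambda_{(N)}}(f) \;<\; \varepsilon/2.$$
Combining gives $V_\Lambda(f-s) < \varepsilon$, hence $\|f-s\|_\Lambda < \varepsilon$.

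The main obstacle is precisely the partition construction. One has to arrange \emph{simultaneously} that (i) the ``interior'' $\Lambda$-variation of $f$ on the complement of the $M$ grid points fits into the tail budget $V_{\Lambda_{(N)}}(f)$, and (ii) the ``boundary'' oscillations at the $M$ grid points, weighted by $\lambda_1^{-1},\ldots,\lambda_{2M}^{-1}$, stay below $\varepsilon/2$. This interplay between tail control (available from $f \in \Lambda BV_c$, not merely $f\in\Lambda BV$) and the finitely many unavoidable grid endpoints is the heart of the argument.
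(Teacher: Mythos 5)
A preliminary remark: the paper itself does not prove this statement --- it is imported verbatim from Prus-Wi\'sniowski \cite{Prus_Absolute_continuity}, so your proposal can only be judged on its own merits, not against an argument in this paper. Your easy direction is essentially correct for step functions with \emph{finitely} many jumps (two caveats: the constant should be $C_s=\sum_l\bigl(|s(p_l+)-s(p_l)|+|s(p_l)-s(p_l-)|\bigr)$, since $s(p_l)$ need not lie between the one-sided limits; and if ``step function of bounded $\Lambda$-variation'' is read as allowing countably many steps --- the reading under which that qualifier is not redundant --- your argument does not cover those). The hard direction, however, has two genuine gaps. First, the rearrangement claim is false: in the ordering of $\mathcal{I}$ maximizing $\sigma_\Lambda(f-s,\mathcal{I})$ the increments are sorted by size irrespective of whether an interval is interior or boundary, and nothing forces the interior intervals into positions $\geq 2M+1$; an interior interval can carry a large increment of $f$ (a jump of $f$ inside a cell, or a slowly accumulated rise), in which case it occupies an early position with a small $\lambda$, and the bound by $V_{\Lambda_{(2M)}}(f)$ collapses. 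What one gets for free is only that the interior terms, re-paired with $\lambda_1,\lambda_2,\dots$, bound their contribution from above, i.e.\ you face the interior $\Lambda$-variation of $f-s$ started at $\lambda_1$ --- exactly the quantity that still needs an argument. Second, the partition choice is circular: your boundary bound $\sum_{k\leq 2M}d^{\mathrm{bd}}_k/\lambda_k<\varepsilon/2$ needs $\sup|f-s|$ (hence the oscillation of $f$ on each cell) to be small compared with $\lambda_1\varepsilon/M$, but shrinking the oscillation forces more cells, i.e.\ a larger $M$; the number of cells is dictated by $f$ alone, so the assertion $M=O(N)$ cannot be arranged.

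Both gaps are repairable, and the repair shows where the split must really be made: at the fixed index $N$, not at the partition-dependent index $2M$. Fix $N$ with $V_{\Lambda_{(N)}}(f)<\varepsilon/4$ and set $\varepsilon'=\varepsilon\lambda_1/(4N)$. Since $f\in\Lambda BV$ is regulated (Perlman's theorem, quoted in the paper), there is a finite partition $0=t_0<\dots<t_M=1$ on each of whose open cells the oscillation of $f$ is $<\varepsilon'$; define $s(t_k)=f(t_k)$ and $s\equiv f(\xi_k)$ on the $k$-th open cell, $\xi_k$ an interior point. Then $\sup|f-s|\leq\varepsilon'$, so every increment of $f-s$ is at most $2\varepsilon'$; moreover the map $\pi$ fixing the grid points and collapsing the $k$-th open cell to $\xi_k$ is nondecreasing and $s=f\circ\pi$, whence $V_{\Lambda_{(m)}}(s)\leq V_{\Lambda_{(m)}}(f)$ for every $m$ (in particular $s\in\Lambda BV$). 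Now sort the increments $d_1\geq d_2\geq\dots$ of $f-s$ over an arbitrary nonoverlapping family: the first $N$ terms contribute at most $2N\varepsilon'/\lambda_1\leq\varepsilon/2$, while $\sum_{j\geq 1}d_{N+j}/\lambda_{N+j}$ is a $\sigma_{\Lambda_{(N)}}$-sum for $f-s$, hence at most $V_{\Lambda_{(N)}}(f)+V_{\Lambda_{(N)}}(s)\leq 2V_{\Lambda_{(N)}}(f)<\varepsilon/2$; since also $(f-s)(0)=0$, this gives $\|f-s\|_\Lambda<\varepsilon$. Note the two ingredients your proposal was missing: smallness of \emph{all} increments of $f-s$, obtained from oscillation control calibrated to $N$ (not to $M$), and the fact that the tail must be charged to $f-s$ rather than to $f$, which requires the comparison $V_{\Lambda_{(N)}}(s)\leq V_{\Lambda_{(N)}}(f)$ coming from the monotone reparametrization $\pi$.
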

For more theorems in this spirit involving also other notions of the $\Lambda$-variation see \cite{Prus_Absolute_continuity}. Now we give the proof of the main result of this section with three interesting corollaries.

\begin{thr}
\label{convergence_characterization}
If $\Lambda$ is a proper $\Lambda$-sequence, then $||B_nf-f||_\Lambda\to 0$ if and only if $f\in C\Lambda BV_c$.
\end{thr}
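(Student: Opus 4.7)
The proof splits into two directions, which exploit Theorem \ref{Bernstein_operators_diminish_lambda_variation} and Proposition \ref{polynomials_wiener_variation} in complementary ways.

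For the \emph{sufficiency} direction ($f\in C\Lambda BV_c \Rightarrow \|B_nf-f\|_\Lambda\to 0$), I would combine the uniform convergence $B_nf\to f$ (available since $f$ is continuous) with the diminishing property applied to shifted sequences. Since $B_nf(0)=f(0)$, it suffices to show $V_\Lambda(B_nf-f)\to 0$. Given $\varepsilon>0$, first use $f\in\Lambda BV_c$ to choose $m$ so that $V_{\Lambda_{(m)}}(f)<\varepsilon/4$. For any finite sequence $\mathcal{I}=\langle I_1,\ldots,I_k\rangle$ of nonoverlapping closed intervals, split $\sigma_\Lambda(B_nf-f,\mathcal{I})$ into the initial $m$ summands and the tail. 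The head is controlled crudely by $2\|B_nf-f\|_\infty\sum_{j=1}^{m} 1/\lambda_j$, which is smaller than $\varepsilon/2$ once $n$ is large enough. The tail, after reindexing $J_i=I_{m+i}$, is precisely a sum of the $\Lambda_{(m)}$-type, so it is at most $V_{\Lambda_{(m)}}(B_nf-f)\le V_{\Lambda_{(m)}}(B_nf)+V_{\Lambda_{(m)}}(f)$. The crux is that $\Lambda_{(m)}$ is itself a $\Lambda$-sequence (it is nondecreasing, and $\sum_{i\ge 1}1/\lambda_{m+i}=\infty$ since the full tail of a divergent series diverges), so Theorem \ref{Bernstein_operators_diminish_lambda_variation} applies to it and gives $V_{\Lambda_{(m)}}(B_nf)\le V_{\Lambda_{(m)}}(f)$. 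Hence the tail is at most $2V_{\Lambda_{(m)}}(f)<\varepsilon/2$, and taking the supremum over $\mathcal{I}$ finishes the direction.

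For the \emph{necessity} direction I propose a much shorter argument. Each $B_nf$ is a polynomial, hence Lipschitz on $[0,1]$ with some constant $L_n$. For any Lipschitz $g$ with constant $L$ and any nonoverlapping intervals in $[0,1]$, monotonicity of $\Lambda$ and $\sum_i |I_i|\le 1$ give
\[
\sum_i \frac{|g(I_i)|}{\lambda_{m+i}}\le \frac{L}{\lambda_{m+1}}\sum_i |I_i|\le \frac{L}{\lambda_{m+1}},
\]
so $V_{\Lambda_{(m)}}(g)\le L/\lambda_{m+1}\to 0$ because $\Lambda$ is proper. Therefore each $B_nf$ belongs to $C\Lambda BV_c$. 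The closure of $C\Lambda BV_c$ in $\|\cdot\|_\Lambda$, established in the proposition preceding this theorem, then forces $f=\lim_n B_nf\in C\Lambda BV_c$.

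The main obstacle lies in the sufficiency direction: one must be alert to the fact that the natural bound on the tail of the partition sum is \emph{not} given directly by Theorem \ref{Bernstein_operators_diminish_lambda_variation} for $\Lambda$ itself, but only after reinterpretation in terms of $\Lambda_{(m)}$, together with the triangle inequality on the $\Lambda_{(m)}$-seminorm. Once that reindexing is in place, the remaining pieces (uniform approximation on continuous functions, interpolation $B_nf(0)=f(0)$, the Lipschitz estimate, and closedness of $C\Lambda BV_c$) are already available from earlier results in the paper and glue together without difficulty.
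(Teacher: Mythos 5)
Your proposal is correct. The sufficiency direction is essentially the paper's own argument: the same head/tail splitting of $\sigma_\Lambda(B_nf-f,\mathcal{I})$ at index $m$, the head controlled by uniform convergence $\|B_nf-f\|_\infty\to 0$, and the tail controlled by observing that it is a $\Lambda_{(m)}$-sum, so that Theorem \ref{Bernstein_operators_diminish_lambda_variation} applied to the $\Lambda$-sequence $\Lambda_{(m)}$ together with the triangle inequality for the seminorm $V_{\Lambda_{(m)}}$ gives the bound $2V_{\Lambda_{(m)}}(f)$; you even make explicit the point the paper leaves tacit, namely that $\Lambda_{(m)}$ is again a $\Lambda$-sequence. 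Where you genuinely diverge is the necessity direction. Both you and the paper reduce it to two facts: $C\Lambda BV_c$ is closed in $\|\cdot\|_\Lambda$ (the proposition preceding the theorem) and every polynomial lies in $C\Lambda BV_c$. But for the second fact the paper invokes the external characterization of Prus-Wi\'sniowski (Theorem \ref{polynomials_continuous_variation}, that $f\in C\Lambda BV_c$ iff $W_\Lambda(f)=0$) combined with Proposition \ref{polynomials_wiener_variation}, whereas you prove it directly: for Lipschitz $g$ with constant $L$, monotonicity of $\Lambda$ and $\sum_i|I_i|\le 1$ give $V_{\Lambda_{(m)}}(g)\le L/\lambda_{m+1}\to 0$ since $\Lambda$ is proper. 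Your estimate is a two-line computation that makes the theorem self-contained modulo results proved in the paper itself, removing the dependence on the cited Wiener-variation characterization; the paper's route, on the other hand, reuses machinery it has already set up and ties the result explicitly to the $W_\Lambda$ circle of ideas. Both are valid, and your version is arguably the more elementary one.
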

\begin{proof}
$C\Lambda BV_c$ is a closed subspace of $\Lambda BV$ containing the space of all polynomials, what may be concluded from Theorem \ref{polynomials_continuous_variation} and Proposition \ref{polynomials_wiener_variation}. It implies that if $||B_nf-f||_\Lambda\to 0$ then $f\in C\Lambda BV_c$.

Assume that $f\in C\Lambda BV_c$. Take $\varepsilon>0$. There exists such $m$ that $V_{\Lambda_{(m)}}(f)\leq \frac{\varepsilon}{3}$. The $\Lambda$-variation diminishing property implies that $V_{\Lambda_{(m)}}(B_nf)\leq \frac{\varepsilon}{3}$ for every natural $n$. Moreover, there exists $N\in\mathbb{N}$ such that:
$$||B_nf-f||_\infty\leq\frac{\varepsilon\lambda_1}{6m}$$
for $n\geq N$.

Let $\langle I_1,..., I_k\rangle$ be any sequence of nonoverlapping intervals contained in $[0,1]$. If $k>m$, then:
$$\sum_{j=1}^k\frac{|(B_nf-f)(I_j)|}{\lambda_j}=\sum_{j=1}^m\frac{|(B_nf-f)(I_j)|}{\lambda_j}+\sum_{j=m+1}^k\frac{|(B_nf-f)(I_j)|}{\lambda_j}\leq $$
$$\leq\frac{1}{\lambda_1}\sum_{j=1}^m|(B_nf-f)(I_j)|+\sum_{j=m+1}^k\frac{|B_nf(I_j)|}{\lambda_j}+\sum_{j=m+1}^k\frac{|f(I_j)|}{\lambda_j}\leq$$
$$\leq\frac{1}{\lambda_1}\cdot 2m\cdot \frac{\varepsilon\lambda_1}{6m}+ V_{\Lambda_{(m)}}(B_nf)+V_{\Lambda_{(m)}}(f)\leq \varepsilon$$
for $n\geq N$. Similarly if $k\leq m$. Hence, using also fact that $\left|(B_nf-f)(0)\right|=0$:
$$||B_nf-f||_\Lambda\leq\varepsilon$$
for $n\geq N$. Since $\varepsilon$ was arbitrary, our theorem is proved.
\end{proof}

\begin{remark}
In the proof of Theorem \ref{convergence_characterization} only two properties of the Bernstein polynomials were significant: that $\|B_nf-f\|_\infty\to 0$ for any continuous function $f$ and that the Bernstein polynomials diminish the $\Lambda$-variation. The Kantorovich polynomials also have these properties. Indeed, the first one follows from chapter 10, paragraph 6 in \cite{Lorentz_Constructive_approximation} and the second one from Theorem \ref{Kantorovich_operators_diminish_lambda_variation}. Therefore, we may prove in a very similar way that $||K_nf-f||_\Lambda\to 0$ if and only if $f\in C\Lambda BV_c$.
\end{remark}

\begin{cor}
If $\Lambda$ is a proper $\Lambda$-sequence, then $C\Lambda BV_c$ is the clousure of the space of polynomials in the $\|\cdot\|_\Lambda$ norm.
\end{cor}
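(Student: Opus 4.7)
The plan is to establish both inclusions between the two closed subspaces of $(\Lambda BV, \|\cdot\|_\Lambda)$. One direction is essentially a consequence of Theorem \ref{convergence_characterization}, and the other direction uses the fact that polynomials themselves are well-behaved members of $C\Lambda BV_c$, combined with the closedness of $C\Lambda BV_c$ established earlier.

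First I would show that every polynomial lies in $C\Lambda BV_c$. Since any polynomial $p$ is a Lipschitz function on $[0,1]$, Proposition \ref{polynomials_wiener_variation} gives $W_\Lambda(p) = 0$. Theorem \ref{polynomials_continuous_variation} then guarantees $p \in C\Lambda BV_c$. Because $C\Lambda BV_c$ was shown to be a closed subspace of $\Lambda BV$ in the $\|\cdot\|_\Lambda$ norm, the entire closure of the polynomial subspace is contained in $C\Lambda BV_c$.

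For the reverse inclusion, take any $f \in C\Lambda BV_c$. By Theorem \ref{convergence_characterization}, the sequence of Bernstein polynomials satisfies $\|B_n f - f\|_\Lambda \to 0$. Since each $B_n f$ is a polynomial, $f$ belongs to the closure of the space of polynomials in the $\|\cdot\|_\Lambda$ norm. Combining both inclusions yields the corollary.

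There is no real obstacle here: the content of the corollary is packaged inside the earlier results, and what remains is to invoke them in the correct order. The only small point to verify carefully is that polynomials are indeed Lipschitz on $[0,1]$ (immediate from the mean value theorem applied to $p'$, which is bounded on the compact interval), so that Proposition \ref{polynomials_wiener_variation} applies and places polynomials into $C\Lambda BV_c$ via Theorem \ref{polynomials_continuous_variation}.
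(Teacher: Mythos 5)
Your proof is correct and follows essentially the same route as the paper: the corollary is stated there without a separate proof precisely because both ingredients you use --- polynomials lie in $C\Lambda BV_c$ (via Proposition \ref{polynomials_wiener_variation} and Theorem \ref{polynomials_continuous_variation}, plus closedness of $C\Lambda BV_c$) and $\|B_nf-f\|_\Lambda\to 0$ for $f\in C\Lambda BV_c$ (Theorem \ref{convergence_characterization}) --- are exactly what the paper's argument for Theorem \ref{convergence_characterization} already supplies. Your extra check that polynomials are Lipschitz on $[0,1]$ is the right detail to verify and matches the paper's implicit use of that fact.
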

\noindent This is a partial answer to Waterman's question from \cite{Waterman_On_Lambda_bounded_variation}.

Before the next corollary let us recall that for every function $f\in C^1[0,1]$, in particular for every polynomial, we have:
\begin{equation}
\label{expression_variation_derivative}
V(f)=\int_0^1\left|f'(x)\right|dx
\end{equation}
\noindent The first proof of the next corollary was given by Prus-Wi\'sniowski in \cite{Prus_Variation_Hausdorff_distance} and was rather technical and long. Observe that in our proof of this statement we use only Theorem \ref{Bernstein_operators_diminish_lambda_variation} and Theorem \ref{convergence_characterization}.
\begin{cor}
If $\Lambda$ is a proper $\Lambda$-sequence, then $C\Lambda BV_c$ is separable.
\end{cor}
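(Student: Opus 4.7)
The plan is to combine the previous corollary (which identifies $C\Lambda BV_c$ with the $\|\cdot\|_\Lambda$-closure of the space of polynomials) with the obvious fact that polynomials with rational coefficients form a countable set. It then suffices to show that this countable set is $\|\cdot\|_\Lambda$-dense in the space of all polynomials; density is transitive, so this yields a countable $\|\cdot\|_\Lambda$-dense subset of $C\Lambda BV_c$.

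To carry out the density step, I would first observe the trivial estimate $V_\Lambda(f) \leq V(f)/\lambda_1$, valid for every $f$, which follows from monotonicity of $\Lambda$ since each summand in a Waterman sum gets the divisor $\lambda_n \geq \lambda_1$. Combined with (\ref{expression_variation_derivative}), this gives, for any polynomial $h$,
$$\|h\|_\Lambda = V_\Lambda(h) + |h(0)| \leq \frac{1}{\lambda_1}\int_0^1 |h'(x)|\,dx + |h(0)|.$$
Applying this to $h = p - q$ with $p, q$ polynomials of degree at most $d$, I can bound $\|p - q\|_\Lambda$ by a constant (depending on $d$) times the maximum difference of coefficients of $p$ and $q$, since both $\int_0^1|h'|$ and $|h(0)|$ depend continuously on the coefficients of $h$.

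Given an arbitrary polynomial $p$, I would now approximate each of its finitely many coefficients by rationals within an arbitrarily small tolerance, producing a polynomial $q$ with rational coefficients such that $\|p-q\|_\Lambda < \varepsilon$. Hence the countable set of polynomials with rational coefficients is $\|\cdot\|_\Lambda$-dense in the set of all polynomials, and therefore (by the preceding corollary) $\|\cdot\|_\Lambda$-dense in $C\Lambda BV_c$, proving separability.

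There is no real obstacle here; the whole proof hinges on the corollary just proved and on the elementary comparison $V_\Lambda \leq V/\lambda_1$. The only point worth emphasising is that, precisely because one has now reduced everything to the $\|\cdot\|_\Lambda$ norm on polynomials, the usual difficulties surrounding $\Lambda$-variation (in particular the subtle behaviour near discontinuities that makes $C\Lambda BV$ itself nonseparable in many cases) never enter the argument.
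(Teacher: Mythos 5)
Your proposal is correct and takes essentially the same route as the paper: both reduce the problem, via the preceding corollary, to showing that polynomials with rational coefficients are $\|\cdot\|_\Lambda$-dense in the space of all polynomials, using the integral formula $V(h)=\int_0^1|h'(x)|\,dx$ together with the comparison $V_\Lambda(h)\leq \lambda_1^{-1}V(h)$. Your write-up just spells out the coefficient-approximation step that the paper leaves implicit.
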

\begin{proof} Obviously, now it sufficies to prove that the space of polynomials is separable in the $\|\cdot\|_\Lambda$ norm. We may easily show using (\ref{expression_variation_derivative}) that for every polynomial $f$ and $\varepsilon>0$ there exists a polynomial $g$ with rational coefficients such that $V(f-g)<\varepsilon$. For $f, g\in BV$ we have that $V_\Lambda(f-g)\leq \lambda_1^{-1} V(f-g)$ and the conclusion follows.
\end{proof}

The Bernstein operators are linear operators from $\Lambda BV$ to $\Lambda BV$. Moreover, Theorem \ref{Bernstein_operators_diminish_lambda_variation} tells us that these operators are also continuous and $\|B_n\|\leq 1$, where $\|\cdot\|$ denotes the standard norm in the space of linear operators from $\Lambda BV$ to $\Lambda BV$. For every constant function $f$ we have $B_nf=f$ (the beginning of the chapter 3.2 in \cite{Farouki_Bernstein_polynomial_basis}), what guarantees that $B_n$ is not a contraction and therefore $\|B_n\|=1$. The Bernstein operators are finite dimensional, continuous operators and hence are compact.

Now we use this fact to prove the characterization of compactness. At the end of chapter 2.1 of \cite{Lorentz_Bernstein_polynomials} the author gives the criterion of compactness in $L^p$ spaces based on the very similar idea with Kantorovich polynomials. Another characterization was provided by Prus-Wi\'sniowski (cf. \cite{Prus_Variation_Hausdorff_distance}). Some other results about compactess in $\Lambda BV$ spaces were recently given also by Bugajewski et al (see \cite{Bugajewski_On_integral_operators_nonlinear_operators}).

\begin{cor}
If $\Lambda$ is a proper $\Lambda$-sequence, $K\subseteq C\Lambda BV_c$ is bounded and closed, then $K$ is compact if and only if $\|B_nf-f\|_\Lambda\to 0$ uniformly for all $f\in K$.
\end{cor}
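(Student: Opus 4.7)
The plan is to prove the two implications by standard Banach-space arguments, exploiting that $C\Lambda BV_c$ is a closed (hence complete) subspace of $\Lambda BV$ and that the Bernstein operators satisfy $\|B_n\|\le 1$ by Theorem \ref{Bernstein_operators_diminish_lambda_variation}. Theorem \ref{convergence_characterization} already delivers $\|B_nf-f\|_\Lambda\to 0$ pointwise on $C\Lambda BV_c$; the task is to upgrade or reverse this pointwise statement.

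For the forward direction ($K$ compact $\Rightarrow$ uniform convergence), I would apply the classical principle that a pointwise-convergent sequence of uniformly bounded linear operators converges uniformly on compact sets. Fix $\varepsilon>0$ and choose a finite $\varepsilon$-net $\{f_1,\dots,f_N\}$ for $K$. By Theorem \ref{convergence_characterization} pick $N_0$ with $\|B_nf_i-f_i\|_\Lambda<\varepsilon$ for all $i$ and $n\ge N_0$. For any $f\in K$, select $f_i$ with $\|f-f_i\|_\Lambda<\varepsilon$ and estimate
$$\|B_nf-f\|_\Lambda\le\|B_n(f-f_i)\|_\Lambda+\|B_nf_i-f_i\|_\Lambda+\|f_i-f\|_\Lambda\le 3\varepsilon,$$
using $\|B_n\|\le 1$. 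This direction is routine.

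For the converse, the main idea is that for each fixed $n$ the image $B_n(K)$ lies in the finite-dimensional subspace $\mathcal{P}_n\subset\Lambda BV$ of polynomials of degree at most $n$, and is bounded there because $K$ is bounded and $\|B_n\|\le 1$. A bounded subset of a finite-dimensional normed space is totally bounded, so $B_n(K)$ is totally bounded in $(\Lambda BV,\|\cdot\|_\Lambda)$. Given $\varepsilon>0$, use the uniform convergence hypothesis to pick $n$ so that $\|B_nf-f\|_\Lambda<\varepsilon/2$ for every $f\in K$, cover $B_n(K)$ by finitely many balls of radius $\varepsilon/2$ centred at $g_1,\dots,g_M$, and conclude that these centres form an $\varepsilon$-net for $K$. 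Hence $K$ is totally bounded; being closed in the complete space $C\Lambda BV_c$, it is compact.

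The only subtlety is confirming that $\mathcal{P}_n$ really is a finite-dimensional subspace of $\Lambda BV$ (all norms on it are therefore equivalent, and bounded sets in it are precompact); this is immediate since $\mathcal{P}_n$ has dimension $n+1$ and $\|\cdot\|_\Lambda$ is a genuine norm on it. Aside from that observation, the argument is a clean combination of Theorem \ref{convergence_characterization} with the finite-dimensional compactness of the Bernstein images, so no serious obstacle is expected.
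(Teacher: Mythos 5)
Your proof is correct and takes essentially the same route as the paper: the forward direction is the same $\varepsilon/3$-net argument combining Theorem \ref{convergence_characterization} with $\|B_n\|\le 1$, and the converse is the same total-boundedness argument, where your explicit appeal to the finite-dimensional range of $B_n$ is exactly the paper's justification that $B_n$ is a compact operator.
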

\begin{proof}
Assume that $\|B_nf-f\|_\Lambda\to 0$ uniformly. Since $K$ is closed, it sufficies to show that it is totally bounded. Set $\varepsilon>0$. There exists $n$ such that $\|B_nf-f\|_\Lambda<\frac{\varepsilon}{2}$ for $f\in K$. Operator $B_n$ is compact, $K$ is bounded and hence $B_nK$ is totally bounded. There exists $f_1, f_2,..., f_m\in K$ such that $B_nK\subseteq \bigcup_{i=1}^m B\left(B_nf_i, \frac{\varepsilon}{2}\right)$. If $f\in K$, then there exists $k$ such that $\|B_nf-B_nf_k\|_\Lambda<\frac{\varepsilon}{2}$, hence $\|B_nf_k-f\|_\Lambda<\varepsilon$. Since $\varepsilon$ was arbitrary, we have that $K$ is totally bounded.

Now, assume that $K$ is compact. There exist $f_1,..., f_k\in K$ such that $K\subseteq \bigcup_{i=1}^k B\left(f_i, \frac{\varepsilon}{3}\right)$. Let $N$ be such that $\|B_nf_i-f_i\|_{\Lambda}<\frac{\varepsilon}{3}$ for $i=1,2,...,.k$, $n\geq N$. Let $f\in K$. There exists $m$ such that $f\in B\left(f_m, \frac{\varepsilon}{3}\right)$. Now for $n\geq N$:
$$\|B_nf-f\|_\Lambda\leq \|B_n\|\|f-f_m\|_\Lambda+\|B_nf_m-f_m\|_\Lambda+\|f_m-f\|_\Lambda<\varepsilon.\hfill\qedhere$$
\end{proof}
\begin{acknowledgment}
I am grateful to Jacek Gulgowski for suggesting the topic of this paper and much valuable advice. I also would like to thank the referee for his constructive comments.
\end{acknowledgment}


\end{document}